\newcommand{\R}{{\mathbb R}}
\newcommand{\A}{{\mathbb A}}
\newcommand{\C}{{\mathbb C}}
\newcommand{\N}{{\mathbb N}}
\newcommand{\E}{{\mathrm{E}}}
\newcommand{\m}{{\textbf{m}}}
\newcommand{\oa}{{\otimes_{\A}}}
\newcommand{\Rn}{{\R}^n}
\newcommand{\dx}{\, \mathrm{d} x}
\newcommand{\dd}{ {\, \mathrm{d}} }
\renewcommand{\dh}{\, \mathrm{d} \mathcal{H}^{n-1}}
\newcommand{\hn}{\mathcal{H}^{n-1}}
\newcommand{\Sn}{{\mathbb{S}^{n-1}}}
\newcommand{\ol}{\overline}
\newcommand{\tr}{\mathrm{tr}\,}
\newcommand{\tro}{\mathrm{tr}_\dom}
\newcommand{\sm}{\setminus}
\newcommand{\Mnn}{{\mathbb{M}^{n\times n}_{sym}}}
\newcommand{\dod}{{\partial_D \Omega}}
\newcommand{\don}{{\partial_N \Omega}}
\newcommand{\dom}{{\partial \Omega}}
\newcommand{\weak}{\rightharpoonup}
\newcommand{\wstar}{\stackrel{*}\rightharpoonup}
\newcommand{\mres}{\mathbin{\vrule height 1.6ex depth 0pt width
0.13ex\vrule height 0.13ex depth 0pt width 1.3ex}}
\DeclareMathOperator*{\aplim}{ap\,lim}
\theoremstyle{plain}
\theoremstyle{plain}
\newtheorem{theorem}{Theorem}[section]
\newtheorem{corollary}[theorem]{Corollary}
\newtheorem{proposition}[theorem]{Proposition}
\theoremstyle{definition}
\newtheorem{definition}[theorem]{Definition}
\theoremstyle{remark}
\newtheorem{remark}[theorem]{Remark}
\numberwithin{equation}{section}
\title[Approximation of cohesive 
fracture energies with an activation threshold]{Phase-field approximation for a class of cohesive fracture energies with an activation threshold} 
\author{Antonin Chambolle \and Vito Crismale}
\address{CMAP, \'Ecole Polytechnique, 91128 Palaiseau Cedex, France}
\email[Antonin Chambolle]{antonin.chambolle@cmap.polytechnique.fr}
\email[Vito Crismale]{vito.crismale@polytechnique.edu}
\begin{document}
\begin{abstract} We study the $\Gamma$-limit of Ambrosio-Tortorelli-type functionals $D_\varepsilon(u,v)$, whose dependence on the symmetrised gradient $e(u)$ is different in $\A u$  and in $e(u)-\A u$, for a $\C$-elliptic symmetric operator $\A$, in terms of the prefactor depending on the phase-field variable $v$. 
The limit energy depends both on the opening and on the surface of the crack, and is intermediate between the Griffith brittle fracture energy and the one considered by Focardi and Iurlano in \cite{FocIur14}.
In particular we prove that $G(S)BD$ functions with bounded $\A$-variation are $(S)BD$.
\smallskip

\noindent
\end{abstract}
\keywords{free discontinuity problems, $\Gamma$-convergence, special functions of bounded deformation, cohesive fracture.}
\subjclass[2010]{ 49J45,  	   
26A45,  	
49Q20,      
74R99,     
35Q74.  
}

\maketitle

\tableofcontents

\section{Introduction}\label{sec:intro}

The energy functionals in Fracture Mechanics are usually expressed  in terms of the \emph{displacement} $u\colon \Omega \subset \Rn \to \Rn$ 
as the sum of a volume part, accounting for the mechanical properties of the uncracked material in the bulk region, and of a surface part, concentrated on  a 
$(n{-}1)$-dimensional discontinuity set of $u$ (the \emph{crack set})
and representing the energy dissipated in the crack process. 

The presence of the crack set entails difficulties in the effective computation of minimisers, for instance by numerical simulations. A possible, and by now classical, way out is to approximate the energy in the sense of $\Gamma$-convergence, through simpler functionals. These depend on the two variables $u\colon \Omega\to \Rn$, which is now a Sobolev function and represents the regularised displacement, and the phase-field $v\colon \Omega \to [0,1]$, whose sublevels $\{v < s\}$, for $s\in (0,1)$, may be used to approximate the limit discontinuity set.
%
%
Such approximations are often called of Ambrosio-Tortorelli type, from the breakthrough paper \cite{AmbTorCPAM} they realised to approximate the Mumford-Shah functional \cite{MumSha} in image reconstruction. 

In the context of Fracture Mechanics,  Ambrosio-Tortorelli approximations are largely employed since the Francfort-Marigo's work \cite{FraMar98} on the variational approach to fracture and the first numerical experiments \cite{BouFraMar00} (see e.g.\ \cite{Bou07, AmbGerDeL15, AAGVD} and references therein). The first case that has been considered is the Griffith energy \cite{Griffith} 
\begin{equation}\tag{G}\label{eq:Griffith}
\int \limits_\Omega f_p(e(u)) \dx + \hn\Big(J_u \cup \big(\dod \cap \{\tro u\neq \tro u_0\}\big)\Big)\,
\end{equation}
where $u_0 \in W^{1,p}(\Rn;\Rn)$ enforces a Dirichlet boundary condition (by penalising $\mathrm{tr}_\dom u$, the trace on $\dom$ of $u$, where different from that of $u_0$  on the Dirichlet boundary $\dod$),  
$e(u):=\frac{\nabla u + \nabla u^T}{2} \in \Mnn$ is the \emph{linearised strain} (in the bulk)  in \emph{small strain assumptions}, $J_u$ is the \emph{jump set}
of $u$ (see Section~\ref{sec:notation}), $\hn$ is the $(n{-}1)$-dimensional Hausdorff measure, and $f_p \colon \Mnn \to [0, +\infty)$ is convex with
\begin{equation} \label{2607182230}\tag{HP$1\,f_p$}
 f_p(0)=0,\qquad  C_{f_p} (|\xi|^p -1) <f_p(\xi)<  C'_{f_p} (|\xi|^p +1)\,, \quad p>1\,,
 \end{equation} 
 for the Frobenius norm $|\cdot|$ 
 on $\Mnn$. As explained e.g.\ in \cite[Section~1]{CFI17DCL} and \cite[Sections~10 and 11]{Hut}, the reference form for $f_p$ is for every $\mu>0$
  \begin{equation}\label{eq:fpm}
f_{p,\mu}(\xi):= \frac{1}{p} \big( (\Sigma \xi \colon \xi + \mu)^{\frac{p}{2}} - \mu^{\frac{p}{2}} \big)\,, 
\end{equation}
where $\Sigma$, such that
$\Sigma(\xi-\xi^T)=0$ and $\Sigma \xi \cdot \xi \geq c_0\, |\xi+\xi^T|^2$ for all $\xi \in \Mnn$,
is the fourth-order Hooke's
tensor: this is a slight generalisation of the original Griffith energy, where the bulk energy is the \emph{linear elastic} energy, that is $p=2$, $\mu=0$ and 
\[
\Sigma \xi \cdot \xi= \frac{1}{4} \lambda_1 |\xi+\xi^T|^2 + \frac{1}{2} \lambda_2 (\mathrm{Tr\,}\xi)^2\,,
\] with $\lambda_1$, $\lambda_2$ the Lamé coefficients. 
The $f_{p,\mu}$ are quadratic for small $\xi$ and with $p$-growth for large $\xi$, and for $p\neq 2$ this may 
account for plastic deformation at large strain.

The Griffith energy \eqref{eq:Griffith} is approximated by the functionals
\begin{equation}\label{2807182146}\tag{G$_\varepsilon$}
\int \limits_\Omega \Big((v+\eta_\varepsilon) f_p(e(u))+\frac{(1-v)^2}{4 \varepsilon}+\varepsilon^{q-1} |\nabla v|^q \Big)\dx \,, \qquad \lim_{\varepsilon \to 0} \frac{\eta_\varepsilon}{\varepsilon^{ p-1 }}=0\,,
\end{equation}
for
 $u\in W_{u_0}^{1,p}(\Omega; \Rn):= W^{1,p}(\Omega; \Rn) \cap \{u \colon \tr_\dom (u - u_0)=0 \text{ on }\dod\}$,
  $v \in W_1^{1,q}(\Omega;[0,1]):= W^{1,q}(\Omega;[0,1]) \cap \{v \colon \tr_\dom v= 1 \text{ on }\dod\}$, 
 and $+\infty$ otherwise:
such approximation has been proven without any \emph{a priori} assumption on $u$, for any $p>1$, and in any dimension in \cite{CC17}, together with compactness for minimisers (see \cite{CC18}),  assuming that
\begin{equation}\label{1806181920}
O_{\delta,x_0}(\dod) \subset \Omega \qquad \text{for $\delta \in (0,\ol \delta)$,}
\end{equation}
for some $\ol \delta>0$ and $x_0\in \Rn$, where $O_{\delta,x_0}(x):=x_0+(1-\delta)(x-x_0)$. 
 This generalises \cite{Cha04, Cha05Add, Iur14}, assuming \emph{a priori} $u \in L^2$ and $p=2$, \cite{CFI17Density}, requiring $u\in L^p$, $p>1$, and \cite{FriPWKorn}, obtained in dimension 2 (see also e.g.\ \cite{Gia05, BurOrtSul13, Neg06}
 for the antiplane shear case and different approximations).

In \cite{FocIur14} 
Focardi and Iurlano studied the 
 limit 
of the functionals 
\begin{equation}\label{2807182147}\tag{C$_\varepsilon$}
\int \limits_\Omega \Big((v+\varepsilon) f_{2,0}(e(u))+\frac{\psi(v)}{\varepsilon}+\,\varepsilon^{q-1} |\nabla v|^q \Big)\dx\,,
\end{equation}
for $u\in H^1(\Omega;\Rn)$, $v\in W^{1,q}(\Omega;[0,1])$, and $+\infty$ otherwise (with $\psi\in C([0,1])$ decreasing, $\psi(1)=0$) and proved that they $\Gamma$-converge to
\begin{equation}\label{en:FocIur}\tag{C}
\int \limits_\Omega  f_{2,0}(e(u)) \dx + c_1 \hn(J_u) + c_2 \int \limits_{J_u} \big| [u] \odot \nu_u \big| \dh\,,
\end{equation}
for suitable $c_1$, $c_2>0$.
The energy space for \eqref{en:FocIur} is $SBD^2$, a subspace of the Special Bounded Deformation functions $SBD$ (see Section~\ref{sec:notation}). For $v \in SBD$, the distributional gradient $\E v:= \frac{\mathrm{D}v + \mathrm{D}^T v}{2}$ is a bounded Radon measure, $J_v$ is the set of points $x$ at which $v$ has two different approximate limits $v^+(x)$,  $v^-(x)$ 
with respect to a suitable direction $\nu_v(x)$, and $[v](x):=v^+(x) - v^-(x)$ is the \emph{jump}. We denote by $\odot$ the symmetrised tensor product, and notice that $[u]\odot \nu_u$ is the part of the total strain $\E u$ concentrated on $J_u$, see \eqref{1205181701}.

The energy \eqref{en:FocIur} 
depends also on the jump amplitude, reflecting mechanical interaction between the fracture lips.  This is typical of \emph{cohesive} fracture energies, in contrast to the \emph{brittle} energy \eqref{eq:Griffith}. On the other hand, \eqref{en:FocIur} has not the form of the classical  cohesive fracture energies in Barenblatt's model \cite{Bar62}, which in particular do not depend on $\hn(J_u)$.  The presence of the measure of the crack surface corresponds to an activation energy which is necessary to nucleate the crack: this
is considered also in \cite{AleMarMauVid}, where it is called ``depinning energy'', in \cite{Almi}, that studies a model for quasistatic evolution, and in the approximation result \cite{BarLazZep16}.
A few others have succeeded in approximating particular instances of pure cohesive energies, as
in \cite{Iur13, CFI15, DMOrlToa16}, see also \cite{AliBraSha99} (in these works the bulk energy is a function of the full gradient $\nabla u$). 

 In this work we approximate fracture energies 
that, as \eqref{en:FocIur}, include the measure of $J_u$, but whose 
cohesive term now 
depends
only on a part of the strain, for instance on its deviatoric part (for $n\geq 3$). Moreover, we consider general $p$-growth ($p>1$) in $e(u)$ of the bulk energy, no integrability assumptions on $u$, and study the Dirichlet boundary problem. 
To present the general case we consider a constant-coefficient, linear, first order differential operator 
\begin{equation}\label{1806182316}
\A u = \sum_{j=1}^n A_j \partial_j u\,, \qquad u \colon \Rn \to \Rn\,,
\end{equation}
for $A_j\in \mathcal{L}(\Rn,\mathbb{M}^{n{\times}n})$ linear mappings.  We assume that $(A_j)_i=(A_i)_j$ and that $\A u \colon \Rn \to \Mnn$, so that  
there is an endomorphism $A$ of $\Mnn$ for which
\begin{equation}\label{2406180949}
\A u = A(e(u))\,.
\end{equation} 
A \emph{Fourier symbol mapping} $\A[z] \colon \Rn \to \Mnn$ is introduced for every $z=(z_1, \dots, z_n)\in \Rn$, 
defined by
\begin{equation}\label{1906181746}
\A[z]v:= v \, \oa z:= \sum_{j=1}^n z_j A_j v = A([v]\odot z)
\end{equation}
for $v\in \Rn$; the operator $\A$ is 
$\R$-\emph{elliptic} if $\A[z]$ is injective for all $z \in \R^n \sm \{0\}$, and  $\C$-\emph{elliptic} if (take the estension of $\A[z]v$ on $\C^n$) $\A[z]\colon \C^n \to \C^{n{\times}n}$ is injective for all $z \in \C^n \sm \{0\}$. These operators have been recently considered in e.g.\ \cite{BreDieGme17, GmeRai17, GmeRai18, DePRin16, ArDeHiRi18, SpeVS18}.
The deviator operator $\E_D u:= \E u -\frac{1}{n}\,(\mathrm{div\,}u) \, \mathrm{Id}_n$ is $\C$-elliptic for $n\geq 3$, but not for $n=2$ (see Remark~\ref{rem:1906181942}).

 From a mechanical point of view, the reference problem is to minimise the energy $F$ under a Dirichlet boundary condition on a part of the boundary $\dod \neq \emptyset$ 
  with possibly the presence of volume forces, and surface forces on the remaining part of the boundary  $\don$, with 
  \begin{equation}\label{3101190848}
  \dom=\dod\cup \don\cup N\,, \quad \dod \cap \don =\emptyset\,, \quad \mathcal{H}^{n-1}(N)=0\,,  \quad\partial(\dod)=\partial(\don)\,,
  \end{equation}
  for $\dod$ and $\don$ relatively open.
Here we assume all forces null, \eqref{1806181920}, and that 
\begin{equation}\label{1806181046}\tag{HP$2\,f_p$}
\hspace{-2em}\lim_{s\to \pm \infty} \frac{f_p(s\, \xi)}{|s|^p}= \tilde{f}_p(\xi) \quad\text{ uniformly as $s\to \pm\infty$, $\xi \in \Mnn$.}
\end{equation}
We have that $\tilde{f}_p$ is positively $p$-homogeneous, and $(\tilde{f}_p)^{\frac{1}{p}}$ is a norm on $\Mnn$ (cf.\ e.g. \cite[Remark~2.7]{FonFus97}).
Then we prove the following, main result of this work. 
 \begin{theorem}\label{teo:main}
Let $\Omega\subset \Rn$ be open bounded Lipschitz  satisfying \eqref{1806181920}, \eqref{3101190848},
$u_0\in W^{1,p}(\Rn; \Rn)$, $p$, $q > 1$, $\gamma>0$, $\varepsilon>0$, $\eta_\varepsilon>0$ such that $\lim_{\varepsilon \to 0} \frac{\eta_\varepsilon}{\varepsilon^{p-1}}=0$, $f_p$ and $\tilde{f}_p$ satisfying \eqref{2607182230}, \eqref{1806181046}, 
$\psi \in C([0,1])$ decreasing with $\psi(1)=0$, 
and
$\A$ be $\C$-elliptic.
 Then the functionals $D_\varepsilon(u, v)$ defined on $u$, $v$ measurable by
\begin{equation*}
D_\varepsilon(u,v):= \int \limits_\Omega \Big[ (v+ \varepsilon^{p-1}) f_p(\A u) + (v + \eta_\varepsilon)\, f_p(e(u)- \A u) + \frac{\psi(v)}{\varepsilon} + \gamma \varepsilon^{q-1}|\nabla v|^q \Big] \dx\,
\end{equation*}
if $u\in W_{u_0}^{1,p}(\Omega; \Rn)$,  $v \in W_1^{1,q}(\Omega;[0,1])$
and by $+\infty$ otherwise, $\Gamma$-converge, as $\varepsilon\to 0$, to 
\begin{equation*}
\begin{split}
D(u,v):=
\int \limits_\Omega & \Big[ f_p\big(A(e(u))\big) +  \, f_p\big(e(u)-A(e(u))\big) \Big] \dx + \int \limits_{J_u} \big[ a + b\, (\tilde{f}_p)^{\frac{1}{p}}([u] \oa \nu_u) \big] \dh \\&
+ \hspace{-2.5em}\int \limits_{\dod \cap \{\mathrm{tr}_{\dom}(u - u_0) \neq 0\}} \hspace{-3em} \big[ a + b\, (\tilde{f}_p)^{\frac{1}{p}}\big( \mathrm{tr}_{\dom} (u - u_0) \oa \nu_{\dom} \big) \big] \dh\,,
\end{split}
\end{equation*}
if
\[
u\in SBD^p(\Omega)\,,\quad  v=1 \text{ a.e.\ in }\Omega\,,
\]
and by $+\infty$ otherwise for $u$, $v$ measurable,
with respect to the topology of convergence in $\mathcal{L}^n$-measure for $u$ and $v$. Above $A$ is the operator introduced in \eqref{2406180949}, and ($\frac{1}{p'}+\frac{1}{p}=\frac{1}{q'}+\frac{1}{q}=1$)
\[a:= 2 (q')^{1/q'} (\gamma q)^{1/q} \int_0^1 \psi^{1/q'},\qquad
b:= p^{1/p} (p')^{1/p'} \psi(0)^{1/p}\,.
\]
Moreover, for every $M>0$ and $\varepsilon < 1$, the sublevel $\{(u,v)\colon D_\varepsilon(u,v) \leq M\}$ is contained in
\[
  \Big\{(u,v) \colon \int_\Omega |\A u| \dx \leq C_M,\, \mathrm{tr}_{\dom}\, u= \mathrm{tr}_{\dom}\, u_0 \text{ on }\dod,\, \int_\Omega \psi(v) \dx\leq  M \varepsilon \Big\}\,.
\] 
Then a sequence of quasi-minimisers for $D_\varepsilon$ converge, up to a subsequence, to a minimiser of $D$,
with respect to the product of the strong $L^r(\Omega;\Rn)$ topology for $u$, for 
any 
$r \in [1, \frac{n}{n-1})$, times the topology of convergence in $\mathcal{L}^n$-measure 
for $v$,  
provided $\hn(\dod \cap \partial \Omega_j)>0$, for each $\Omega_j$ connected component of $\Omega$.
 \end{theorem}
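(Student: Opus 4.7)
I would follow the classical three-step scheme for $\Gamma$-convergence of Ambrosio-Tortorelli-type functionals — compactness of sublevels, liminf inequality, and construction of recovery sequences — and then deduce the convergence of quasi-minimisers from equi-coercivity plus a $(G)SBD$-Poincar\'e inequality.

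\textbf{Compactness and sublevel bounds.} From $D_\varepsilon(u_\varepsilon,v_\varepsilon)\le M$ one immediately reads $\int_\Omega\psi(v_\varepsilon)\dx\le M\varepsilon$, so $v_\varepsilon\to 1$ in measure with $|\{v_\varepsilon<1/2\}|\le M\varepsilon/\psi(1/2)$. Splitting $\Omega$ at $\{v_\varepsilon=1/2\}$ and combining the $L^1$-bound on $f_p(\A u_\varepsilon)$ on the good set with H\"older on the bad set, the factor $\varepsilon^{-(p-1)/p}$ produced by $(\varepsilon^{p-1}f_p(\A u))^{1/p}$ is exactly compensated by $|\{v_\varepsilon<1/2\}|^{1/p'}$, giving the bound $\int_\Omega|\A u_\varepsilon|\dx\le C_M$ claimed in the theorem; the trace condition on $\dod$ is inherited from the ambient Sobolev class $W^{1,p}_{u_0}$. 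To actually extract a limit I would modify $u_\varepsilon$ on the bad set $\{v_\varepsilon<\tau_\varepsilon\}$, in the spirit of Chambolle-Crismale \cite{CC17,CC18}, to obtain $\tilde u_\varepsilon\in SBD^p(\Omega)$ with $\E\tilde u_\varepsilon$ bounded and $\tilde u_\varepsilon=u_\varepsilon$ off a set of vanishing measure; the $GSBD^p$-compactness theorem then produces a subsequential limit $u\in GSBD^p$, and the $L^1$-bound on $\A u_\varepsilon$ combined with the $\C$-ellipticity of $\A$ upgrades $u$ to $SBD^p$ via the structural embedding announced in the abstract.

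\textbf{Liminf inequality.} For the two bulk contributions, convexity of $f_p$, the fact that $A$ is an endomorphism of $\Mnn$, and the convergence $v_\varepsilon\to 1$ a.e.\ deliver the two volume integrals of $D(u,v)$ by Ioffe-type lower semi-continuity, applied separately to $\A u$ and $e(u)-\A u$. For the surface part I would slice in generic directions $\xi\in\Sn$ and reduce on $\mathcal H^{n-1}$-a.e.\ line to a one-dimensional Ambrosio-Tortorelli-type problem. On each individual $v$-transition of width $O(\varepsilon)$, the Modica-Mortola couple $\psi(v)/\varepsilon+\gamma\varepsilon^{q-1}|v'|^q$ relaxes to the activation constant $a$. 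Simultaneously, the weak penalty $\varepsilon^{p-1}f_p(\A u)$, balanced against $\psi(v)/\varepsilon$ by Young's inequality and using the $p$-homogeneity \eqref{1806181046} of $\tilde f_p$, contributes exactly $b\,(\tilde f_p)^{1/p}([u]\oa\nu_u)$; crucially, the complementary term $(v_\varepsilon+\eta_\varepsilon)f_p(e(u_\varepsilon)-\A u_\varepsilon)$ contributes nothing to the surface since $\eta_\varepsilon\ll\varepsilon^{p-1}$, which is why the cohesive integrand depends only on the $\A$-component of the jump. A Fatou/superadditivity argument then sums these contributions over $J_u$. The boundary term on $\dod$ is produced by extending $u_\varepsilon$ as $u_0$ across $\dod$ through the retraction \eqref{1806181920}, turning a possible trace jump into an interior jump of the same size.

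\textbf{Recovery sequence, quasi-minimisers, and main obstacle.} For the recovery sequence, reducing first by density in energy to $u\in SBD^p(\Omega)\cap L^\infty$ with $J_u$ a finite union of $C^1$ graphs, one wraps $J_u$ in a tubular neighbourhood of width $\delta_\varepsilon\downarrow 0$, takes $v_\varepsilon$ to be the optimal one-dimensional AT profile in the normal direction, and lets $u_\varepsilon$ interpolate linearly between the one-sided traces $u^\pm$ across the tube; tuning $\delta_\varepsilon$ makes $(v_\varepsilon+\eta_\varepsilon)f_p(e(u_\varepsilon)-\A u_\varepsilon)\to 0$ while the weak penalty $\varepsilon^{p-1}f_p(\A u_\varepsilon)$ on the tube relaxes, by \eqref{1806181046}, to $b\,(\tilde f_p)^{1/p}([u]\oa\nu_u)\hn(J_u)$. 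The Dirichlet datum is matched, again using \eqref{1806181920}, by replacing $u_\varepsilon$ by $u_0$ in a thin boundary layer. Once $\Gamma$-convergence is in hand, equi-coercivity from the sublevel bound yields subsequential convergence of quasi-minimisers; strong $L^r$ convergence for $r<n/(n-1)$ follows from a $(G)SBD$-Poincar\'e inequality, whose indeterminacy up to infinitesimal rigid motions is removed by the non-degeneracy hypothesis $\hn(\dod\cap\partial\Omega_j)>0$. The most delicate single step, in my opinion, is the cohesive part of the surface liminf: pinning down the exact constant $b$ requires a simultaneous two-scale analysis in which the $v$-profile must be long enough to saturate Modica-Mortola and short enough for the $\varepsilon^{p-1}f_p(\A u)$-term to aggregate exactly to $b\,(\tilde f_p)^{1/p}([u]\oa\nu_u)$; equally crucial is showing that the slice-wise limit belongs to $SBD$ rather than merely to $GSBD$, which is precisely where $\C$-ellipticity of $\A$ and the structural embedding announced in the abstract come in.
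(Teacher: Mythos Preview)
Your overall architecture is right and several ingredients match the paper exactly (Young's inequality producing the constant $b$, Ioffe--Olech for the bulk, the extension across $\dod$, and the upgrade from $GSBD$ to $SBD$ via $\C$-ellipticity). There are, however, two genuine gaps.

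\textbf{The cohesive liminf via slicing will not close.} On a one-dimensional section in direction $\xi$, the restriction of $\A u_k$ to the line $\{y+t\xi\}$ depends on \emph{all} partial derivatives of $u_k$, not only on $(u_k)^\xi_y$ and its $t$-derivative; hence the sliced problem is not autonomous and there is no mechanism to reconstruct the anisotropic density $(\tilde f_p)^{1/p}([u]\oa\nu_u)$ from the scalar slice data $[\widehat u^\xi_y]$. The paper does not slice for this term. After the Young step \eqref{2506180931} it introduces the measures $\mu_k(B)=\int_{B\cap\{\tilde v_k\le\lambda\}}(f_p)^{1/p}(\A\tilde u_k)\dx$ and $\widehat\mu_k(B)=\int_B(\tilde f_p)^{1/p}(\A\tilde u_k)\dx$, performs a blow-up at $\hn$-a.e.\ $x\in J_{\tilde u}$, shows $\mu$ and $\widehat\mu$ have the same density on $J_{\tilde u}$ (using \eqref{1806181046} to pass from $f_p$ to $\tilde f_p$ where $|\A\tilde u_k|$ is large), and then applies Reshetnyak's lower semicontinuity theorem to the convex positively $1$-homogeneous integrand $(\tilde f_p)^{1/p}$ together with the weak-$*$ convergence $\A\tilde u_k\wstar\A\tilde u$ in $\mathcal M_b$. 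The identification of the limit density as $(\tilde f_p)^{1/p}([u]\oa\nu_u)$ is exactly Corollary~\ref{cor:2306181058}, i.e.\ $\A u\mres J_u=[u]\oa\nu_u\,\hn\mres J_u$, which in turn rests on Theorem~\ref{teo:GSBDdiventaSBD}. For a general $\C$-elliptic $\A$ this blow-up/Reshetnyak route appears unavoidable; your slicing plan would at best recover the $a\,\hn(J_u)$ part.

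\textbf{The density reduction in the limsup is not free.} You write ``reducing first by density in energy to $u\in SBD^p\cap L^\infty$ with $J_u$ a finite union of $C^1$ graphs'', but the target energy $D$ contains the cohesive term $\int_{J_u}(\tilde f_p)^{1/p}([u]\oa\nu_u)\dh$, so the approximants must converge in the jump \emph{amplitude}, not merely in $\hn(J_u)$ and $e(u)$. The $GSBD^p$ density results \cite{Iur14,CFI17Density,CC17,FriPWKorn} and the $SBD$ approximations \cite{Cha04,Cha05Add} do not control $\int_{J_u}|[u]|\dh$ unless $u\in L^\infty$ a priori, which is precisely what you are trying to reach; the paper stresses this point explicitly. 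The gap is closed with the $BD$-norm approximation \cite{Cri19} (Theorem~\ref{teo:density} here), which gives $\|u_k-u\|_{BD}\to 0$ for arbitrary $u\in SBD^p$, and only afterwards invokes Cortesani--Toader. The Dirichlet matching is likewise more involved than ``replacing $u_\varepsilon$ by $u_0$ in a thin boundary layer'': it requires a Nitsche-type extension near $\don$ (again from \cite{Cri19}) combined with the dilation \eqref{1806181920}, to avoid creating uncontrolled jump energy on $\dom$. Once at the regular level, your recovery construction (linear interpolation across a tube, optimal AT profile for $v$) is essentially the paper's, with the tube half-width $\sigma_k(x')$ chosen proportional to $\varepsilon_k\,(\tilde f_p)^{1/p}([u](x',0)\oa e_n)$ so that the contributions of $\varepsilon_k^{p-1}f_p(\A u_k)$ and $\psi(0)/\varepsilon_k$ on $A_k$ saturate the Young inequality and sum to $b\int_{J_u}(\tilde f_p)^{1/p}([u]\oa\nu_u)\dh$.

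A minor point on compactness: the strong $L^r$ convergence of quasi-minimisers is obtained not from a $GSBD$-Poincar\'e inequality but from the $BV^{\A}$ Poincar\'e--Sobolev inequality \eqref{2206182212} and the compact embedding \eqref{2206182156}; the indeterminacy is modulo $N(\A)$ (polynomials, since $\A$ is $\C$-elliptic), and it is killed because the extended functions equal $u_0$ on $\Omega'\setminus\ol\Omega$, a set of positive measure in each connected component.
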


The functionals $f_{p,\mu}$ in \eqref{eq:fpm} satisfy \eqref{1806181046}, and $f_{2,\mu}(\A u) + f_{2,\mu}(e(u)- \A u)= f_{2,\mu}(e(u))$ if $\A = \E_D$, so, in this case, we recover the linear elastic energy in the bulk.

Our approximating functionals are in some sense intermediate between those in \eqref{2807182146} and \eqref{2807182147}, since the part corresponding to $e(u)-\A u$ is multiplied by $v+\eta_\varepsilon$ as in \eqref{2807182146}, while $f_p(\A u)$ is multiplied by  $v+\varepsilon^{p-1}$ as in \eqref{2807182147} for $p=2$. This results in an interaction between $(v+ \varepsilon^{p-1}) f_p(\A u)$ and $\frac{\psi(v)}{\varepsilon}$ that gives the 
term in $[u]$ in the limit. As usual, the surface of $J_u$ is approximated by the Ambrosio-Tortorelli part $ \frac{\psi(v)}{\varepsilon} + \gamma \varepsilon^{q-1}|\nabla v|^q$.

Since the integrals of $(v+\eta_\varepsilon) f_p(e(u)-\A u)$ and $\frac{\psi(v)}{\varepsilon}$ are not energetically of the same order, we have an \emph{a priori} control only on $\A u$ as a Radon measure, differently from \cite{FocIur14}, where this control is on the whole $\E u$. For this reason we initially work in the space $GSBD$ of \emph{generalised} $SBD$ functions, introduced by Dal Maso in \cite{DM13} to study brittle fracture  (in \cite{FocIur14} the control on $\E u$ allowed to work directly in $SBD$). 
A crucial point is to establish the expression of $\A u$ on the set $J_u$, in particular to show that \[\A u \mres J_u= [u] \oa \nu_u \, \hn \mres J_u\] ($J_u$, $[u]$, $\nu_u$ are well defined in $GSBD$, see Section~\ref{sec:notation}): we prove this equality employing the tools developed in \cite{BreDieGme17, GmeRai17, GmeRai18}  to show the existence of a trace for functions with bounded $\A$-variation if and only if $\A$ is $\C$-elliptic. This is enough to conclude that $GSBD$ functions with bounded $\A$-variation are in fact in $SBD$, because we deduce that $[u]$ is integrable on $J_u$ 
(this is also true for $GBD$, $BD$ in place of $GSBD$, $SBD$, see Theorem~\ref{teo:GSBDdiventaSBD}).
 Technical problems arise to prove the same in dimension 2 for $\A=\E_D$ (see Remark~\ref{2907181935}). 

We remark that in \cite{CCF17} the approximating functionals weight differently $\E_D u + \mathrm{div}^+ u$ (multiplied by $(v+\epsilon)$) and $\mathrm{div}^- u$ (without any prefactor), so that $\mathrm{div}^- u$ is equibounded in $L^2$ and in the limit $[u]\cdot \nu \geq 0$, a linearised non-interpenetration condition. Here we could also separate the behaviours of $\mathrm{div}^+ u$ and $\mathrm{div}^- u$ (Remarks~\ref{rem:0208182013} and \ref{rem:0308182234}), but the meaning of our approach is in some sense opposite to \cite{CCF17}, since we do not pay, in the limit 
part in $[u]$,
for 
the terms multiplied by $v+\eta_\varepsilon$, while in \cite{CCF17} the  concentration of terms without prefactor pay infinite energy. One might also consider a non-interpenetration condition in our model, for instance by studying the $\Gamma$-limit of (for $\mathrm{Id}$ the identity $n{\times}n$ matrix)
\begin{equation*}
\int \limits_\Omega \Big[ (v+ \varepsilon^{p-1}) f_p(\E_D u) + (v + \eta_\varepsilon)\, f_p(\mathrm{div}^+ u \,\mathrm{Id}) + f_p(\mathrm{div}^- u \,\mathrm{Id}) + \frac{\psi(v)}{\varepsilon} + \gamma \varepsilon^{q-1}|\nabla v|^q \Big] \dx\,,
\end{equation*}
but the $\Gamma$-$\limsup$ inequality presents hard difficulties. In this respect, we point out that for the $\Gamma$-$\limsup$ inequality in Theorem~\ref{teo:main} we employ the approximation result \cite{Cri19} for $SBD^p$ functions in $BD$-norm, which allows us to prove the result without any regularity assumption on the displacement, as the uniform $L^\infty$ bound in \cite{FocIur14}.  We have also to refine the argument of \cite{Cri19}, in order to deal with Dirichlet boundary conditions. 



\section{Notation and a preliminary result}\label{sec:notation}

We denote by $\mathcal{L}^n$ and $\mathcal{H}^k$ the $n$-dimensional Lebesgue measure and the $k$-dimensional Hausdorff measure. For any locally compact subset $B$ of $\Rn$, the space of bounded $\R^m$-valued Radon measures on $B$ is indicated as $\mathcal{M}_b(B;\R^m)$. For $m=1$ we write $\mathcal{M}_b(B)$ for $\mathcal{M}_b(B;\R)$ and $\mathcal{M}^+_b(B)$ for the subspace of positive measures of $\mathcal{M}_b(B)$. For every $\mu \in \mathcal{M}_b(B;\R^m)$, $|\mu|(B)$ stands for its total variation.
We use the notation: 
$B_\varrho(x)$ [and $Q_\varrho(x)$] for the open ball [cube] with center $x$ and radius [sidelength] $\varrho$; $x\cdot y$, $|x|$ for the scalar product and the norm in $\Rn$; $1^*$ for $n/(n-1)$, $n$ being the space dimension; $\dd(x, A)$ for the distance of $x$ from $A$; $A \Subset K$ when $A$ compactly contained in $K$.

We recall the definition of approximate limit with respect to the convergence in measure and approximate jump set for measurable functions.
\begin{definition}\label{def:aplim}
Let $A\subset \Rn$, $v\colon A \to \R^m$ an $\mathcal{L}^n$-measurable function, $x\in \Rn$ such that
\begin{equation*}
\limsup_{\varrho\to 0^+}\frac{\mathcal{L}^n(A\cap B_\varrho(x))}{\varrho^n}>0\,.
\end{equation*}
A vector $a\in \Rn$ is the \emph{approximate limit} of $v$ as $y$ tends to $x$ if for every $\varepsilon>0$
\begin{equation*}
\lim_{\varrho \to 0^+}\frac{\mathcal{L}^n(A \cap B_\varrho(x)\cap \{|v-a|>\varepsilon\})}{\varrho^n}=0\,,
\end{equation*}
and then we write
\begin{equation}\label{3105171542}
\aplim \limits_{y\to x} v(y)=a\,.
\end{equation}
\end{definition}

\begin{definition}\label{def:2306181018}
Let $U\subset \Rn$ open, and $v\colon U\to \R^m$ be $\mathcal{L}^n$-measurable. The \emph{approximate jump set} $J_v$ is the set of points $x\in U$ for which there exist $a$, $b\in \R^m$, with $a \neq b$, and $\nu\in \Sn$ such that
\begin{equation*}
\aplim\limits_{(y-x)\cdot \nu>0,\, y \to x} v(y)=a\quad\text{and}\quad \aplim\limits_{(y-x)\cdot \nu<0, \, y \to x} v(y)=b\,.
\end{equation*}
The triplet $(a,b,\nu)$ is uniquely determined up to a permutation of $(a,b)$ and a change of sign of $\nu$, and is denoted by $(v^+(x), v^-(x), \nu_v(x))$. The jump of $v$ is the function 
defined by $[v](x):=v^+(x)-v^-(x)$ for every $x\in J_v$. 
\end{definition}
\par
\medskip
\paragraph{\bf $BV$ and $BD$ functions.}
For $U\subset \Rn$ open, a function $v\in L^1(U)$ is a \emph{function of bounded variation} on $U$, denoted by $v\in BV(U)$, if $\mathrm{D}_i v\in \mathcal{M}_b(U)$ for $i=1,\dots,n$, where $\mathrm{D}v=(\mathrm{D}_1 v,\dots, \mathrm{D}_n v)$ is its distributional gradient. A vector-valued function $v\colon U\to \R^m$ is $BV(U;\R^m)$ if $v_j\in BV(U)$ for every $j=1,\dots, m$.

A $\mathcal{L}^n$-measurable bounded set $E\subset \R^n$ is a set of \emph{finite perimeter} if $\chi_E$ is a function of bounded variation. The \emph{reduced boundary} of $E$, denoted by $\partial^*E$, is the set of points $x\in \mathrm{supp}\, |\mathrm{D}\chi_E|$ such that the limit $\nu_E(x):=\lim_{\varrho \to 0^+}\frac{\mathrm{D}\chi_E(B_\varrho(x))}{|\mathrm{D}\chi_E|(B_\varrho(x))}$ exists and satisfies $|\nu_E(x)|=1$. The reduced boundary is countably $(\hn, n-1)$ rectifiable, and the function $\nu_E$ is called \emph{generalised inner normal} to $E$.

The space of \emph{functions of bounded deformation} on $U$ is
\begin{equation*}
BD(U):=\{v\in L^1(U;\Rn) \colon \E v \in \mathcal{M}_b(U;\Mnn)\}\,,
\end{equation*}
where $\E v$ is the distributional symmetric gradient of $v$.
It is well known (see \cite{AmbCosDM97, Tem}) that for $v\in BD(U)$, the \emph{jump set} $J_v$, defined as the set of points $x\in U$ where $v$ has two different one sided Lebesgue limits $v^+(x)$ and $v^-(x)$ with respect to a suitable direction $\nu_v(x)\in \Sn$, is countably $(\hn, n-1)$ rectifiable (see, e.g.\ \cite[3.2.14]{Fed}), and that
\begin{equation*}
\mathrm{E}v=\mathrm{E}^a v+ \mathrm{E}^c v + \mathrm{E}^j v\,,
\end{equation*}
where $\mathrm{E}^a v$ is absolutely continuous with respect to $\mathcal{L}^n$, $\mathrm{E}^c v$ is singular with respect to $\mathcal{L}^n$ and such that $|\mathrm{E}^c v|(B)=0$ if $\hn(B)<\infty$, while 
\begin{equation}\label{1205181701}
\E^j v=[v]\odot \nu_v \,\hn  \mres  J_v\,.
\end{equation}
In the above expression of $\E^j v$, $[v]$ denotes the \emph{jump} of $v$ at any $x\in J_v$ and is defined by $[v](x):=(v^+-v^-)(x)$, the symbols $\odot$ and $\mres$ stands for the symmetric tensor product and the restriction of a measure to a set, respectively. Since $|a\odot b| \geq |a||b|/\sqrt{2}$ for every $a$, $b$ in $\Rn$, it holds $[v]\in L^1(J_v;\Rn)$. 
The density of $\mathrm{E}^a v$ with respect to $\mathcal{L}^n$ is denoted by $e(v)$, and we have that (see \cite[Theorem~4.3]{AmbCosDM97}) for $\mathcal{L}^n$-a.e.\ $x\in U$
\begin{equation}\label{1906180942}
\aplim_{y\to x}\frac{\big(v(y)-v(x)-e(v)(x)(y-x)\big)\cdot (y-x)}{|y-x|^2} =0\,.
\end{equation}
The space $SBD(U)$ is the subspace of all functions $v\in BD(U)$ such that $\mathrm{E}^c v=0$, while for $p\in (1,\infty)$
\begin{equation*}
SBD^p(U):=\{v\in SBD(U)\colon e(v)\in L^p(U;\Mnn),\, \hn(J_v)<\infty\}\,.
\end{equation*}
Analogous properties hold for $BV$, as the countable rectifiability of the jump set and the decomposition of $\mathrm{D}v$, and the spaces $SBV(U;\R^m)$ and $SBV^p(U;\R^m)$ are defined similarly, with $\nabla v$, the density of $\mathrm{D}^a v$ with respect to $\mathcal{L}^n$, in place of $e(v)$.

We now recall some slicing properties of $SBD$ that will be useful in Theorem~\ref{teo:GSBDdiventaSBD}. 
As general notation, fixed $\xi \in \Sn:=\{\xi \in \Rn\colon |\xi|=1\}$, for any $y\in \Rn$ and $B\subset \Rn$ let
\begin{equation*}
\Pi^\xi:=\{y\in \Rn\colon y\cdot \xi=0\},\qquad B^\xi_y:=\{t\in \R\colon y+t\xi \in B\}\,,
\end{equation*}
and for every function $v\colon B\to \R^n$ and $t\in B^\xi_y$ let
\begin{equation*}
v^\xi_y(t):=v(y+t\xi),\qquad \widehat{v}^\xi_y(t):=v^\xi_y(t)\cdot \xi\,.
\end{equation*}
The following proposition collects some results from \cite{AmbCosDM97} (see Propositions~3.2, 4.7, and Theorem~4.5 therein).
\begin{proposition}\label{prop:1906180944}
Let $v\in L^1(U; \Rn)$ and $e_1,\dots,e_n$ be a basis of $\Rn$. Then $v\in BD(U)$ [resp.\ $SBD(U)$] if and only if for every $\xi=e_i+e_j$, $1\leq i,\, j \leq n$
\begin{equation}\label{2306181022}
\begin{split}
\widehat{v}^\xi_y \in BV(U^\xi_y) \text{ [resp.\ $SBV(U^\xi_y) $]  }\quad &\text{for }\hn\text{-a.e.\ }y\in \Pi^\xi\,,\\
\int \limits_{\Pi^\xi} |\mathrm{D}\widehat{v}^\xi_y|(U^\xi_y) \dh(y) & < +\infty\,. 
\end{split}
\end{equation}
Moreover, let $v\in BD(U)$, $\xi \in \Sn$ and $J^\xi_v:=\{x\in J_v \colon [v]\cdot \xi \neq 0\}$ (it holds that $\hn(J_v\sm J_v^\xi)=0$ for $\hn$-a.e.\ $\xi \in \Sn$). Then 
\begin{equation}\label{1906181642}
\E^j v\, \xi \cdot \xi = \int \limits_{\Pi^\xi} \int\limits_{J_{\widehat{v}^\xi_y}} \big[\widehat{v}^\xi_y\big] \mathrm{d}t \dh(y),\,\quad |\E^j v\, \xi \cdot \xi|(U) = \int \limits_{\Pi^\xi} \int\limits_{J_{\widehat{v}^\xi_y}} \big|\big[\widehat{v}^\xi_y\big] \big| \mathrm{d}t \dh(y),
\end{equation}
and for $\hn$-a.e.\ $y\in\Pi^\xi$
\begin{subequations}
\begin{align}
e(v)^\xi_y\, \xi\cdot \xi &=\nabla \widehat{v}^\xi_y \quad\mathcal{L}^1\text{-a.e.\ on }U^\xi_y\,,\label{3105171927}\\
(J^\xi_v)^\xi_y= J_{\widehat{v}^\xi_y} \quad\text{and}\quad & v^\pm(y+t\xi)  \cdot \xi = (\widehat{v}^\xi_y)^\pm(t) \ \text{ for }t\in (J_v)^\xi_y\,,\label{1906181638}
\end{align}
\end{subequations}
where the normals to $J_v$ and $J_{\widehat{v}^\xi_y}$ are oriented so that $\xi \cdot \nu_v \geq 0$ and $\nu_{\widehat{v}^\xi_y}=1$.
\end{proposition}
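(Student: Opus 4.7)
My plan is to base everything on a Fubini-type disintegration of the scalar measures $\langle \E v\,\xi,\xi\rangle$ along the foliation of $U$ by lines parallel to $\xi$. The key identity, valid in the sense of distributions on $U$, is
\[
\frac{\mathrm{d}}{\mathrm{d}t}\widehat{v}^\xi_y(t) = \bigl(\E v\,\xi\cdot\xi\bigr)^\xi_y,
\]
which relates the one-dimensional distributional derivatives of the scalar slices $\widehat{v}^\xi_y$ to the slices of the scalar distribution $\partial_\xi(v\cdot\xi)=\E v\,\xi\cdot\xi$. This is the bridge that converts properties of $\E v$ into properties of the family $\{\widehat{v}^\xi_y\}_{y\in\Pi^\xi}$ and vice versa.

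For the \emph{``only if''} direction, given $v\in BD(U)$ the scalar measure $\E v\,\xi\cdot\xi$ is finite on $U$ for every $\xi\in\Sn$. Standard disintegration (Fubini for measures with respect to the projection $U\to\Pi^\xi$) yields that for $\hn$-a.e.\ $y\in\Pi^\xi$ the slice $\widehat{v}^\xi_y$ belongs to $BV(U^\xi_y)$ with $|\mathrm{D}\widehat{v}^\xi_y|$ equal to the disintegrated measure on the line, and
\[
\int_{\Pi^\xi}|\mathrm{D}\widehat{v}^\xi_y|(U^\xi_y)\dh(y) = |\E v\,\xi\cdot\xi|(U).
\]
If $v\in SBD(U)$, then $\E^c v=0$ implies $(\E v\,\xi\cdot\xi)^c=0$, which transfers to the slices by the scalar $BV$ slicing theory, giving $\widehat{v}^\xi_y\in SBV(U^\xi_y)$. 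For the \emph{converse}, the hypothesis produces, for each $\xi=e_i+e_j$, a bounded Radon measure $\mu_\xi$ on $U$ whose one-dimensional slices coincide with $\mathrm{D}\widehat{v}^\xi_y$. Identifying $\mu_\xi$ with $\partial_\xi(v\cdot\xi)$ as distributions and polarising,
\[
\E v\,e_i\cdot e_j = \tfrac12\bigl(\mu_{e_i+e_j}-\mu_{e_i}-\mu_{e_j}\bigr),
\]
one recovers every entry of $\E v$ as a bounded Radon measure, using only the $n(n{+}1)/2$ directions $\xi=e_i+e_j$ with $1\leq i\leq j\leq n$; the $SBD$ case then follows, since $SBV$ on a.e.\ slice of each $\mu_\xi$ lifts to vanishing of its scalar Cantor part, whence $\E^c v=0$ componentwise.

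For the identities in the ``Moreover'' part, the jump formula \eqref{1906181642} is obtained from $\E^j v=[v]\odot\nu_v\,\hn\mres J_v$ in \eqref{1205181701}, by applying the area formula for the projection along $\xi$ onto $\Pi^\xi$ on the rectifiable set $J_v$; this reduces the $(n{-}1)$-dimensional integral to the double integral $\dh(y)\otimes\ho(t)$ over the slices, yielding \eqref{1906181642}. The identifications \eqref{1906181638} follow from Federer's slicing of rectifiable sets together with the characterisation of $v^\pm$ as one-sided approximate limits of $v$ (applied to the scalar function $v\cdot\xi$ on $\hn$-a.e.\ line). The absolutely continuous identity \eqref{3105171927} is the Calder\'on--Zygmund-type disintegration of $\E^a v$ along lines parallel to $\xi$, giving $\nabla\widehat{v}^\xi_y(t)=e(v)(y+t\xi)\xi\cdot\xi$ for $\mathcal{L}^1$-a.e.\ $t$ and $\hn$-a.e.\ $y$.

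The main obstacle is the converse direction: one must turn the slice-wise information into a globally bounded Radon measure on $U$ in a measurable way. This requires a careful selection of good representatives of $v$ on $\hn$-a.e.\ line, obtained by combining the $L^1$-integrability of $v$ with Lebesgue differentiation on slices, so that the integrated total variation actually controls the distribution $\partial_\xi(v\cdot\xi)$ on the full domain. The analogous subtlety for the $SBV\Rightarrow SBD$ passage is handled by applying scalar $BV$ slicing theory to $v\cdot\xi$ in each of the $n(n{+}1)/2$ coordinate directions $e_i+e_j$, and then invoking the polarisation above.
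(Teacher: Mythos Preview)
The paper does not give its own proof of this proposition: it is stated as a collection of results from Ambrosio--Coscia--Dal~Maso \cite{AmbCosDM97} (Propositions~3.2, 4.7 and Theorem~4.5 therein), with no argument reproduced. Your sketch is correct and is precisely the line of reasoning in that reference: the Fubini-type disintegration of the scalar measure $\E v\,\xi\cdot\xi=\partial_\xi(v\cdot\xi)$ along lines parallel to $\xi$, the polarisation over the directions $e_i+e_j$ to reconstruct all entries of $\E v$, the area formula on the rectifiable set $J_v$ for the jump identity, and the identification of the absolutely continuous and Cantor parts on slices. One small caveat: the equality you write as $\int_{\Pi^\xi}|\mathrm{D}\widehat{v}^\xi_y|(U^\xi_y)\dh(y)=|\E v\,\xi\cdot\xi|(U)$ does not hold as a bare disintegration identity for the total variation of an arbitrary measure; it holds here because $\E v\,\xi\cdot\xi$ is itself a one-dimensional directional derivative, so that on each line the sliced distribution is exactly $\mathrm{D}\widehat{v}^\xi_y$, and then the Fubini argument for BV applies. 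You flag this point yourself under ``main obstacle'', so the sketch is sound.
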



For more details on the spaces $BV$, $SBV$ and $BD$, $SBD$ we refer to \cite{AFP} and to \cite{AmbCosDM97, BelCosDM98, Bab15, Tem}, respectively.

\par
\medskip
\paragraph{\bf $GBD$ functions.}
The space $GBD$ of \emph{generalised functions of bounded deformation} has been introduced in \cite{DM13} (to which we refer for a general treatment) and it is defined by slicing as follows. 

\begin{definition}[\cite{DM13}]\label{def:GBD}
Let $\Omega\subset \Rn$ be bounded and open, and $v\colon \Omega\to \Rn$ be $\mathcal{L}^n$-measurable. Then $v\in GBD(\Omega)$ if there exists $\lambda_v\in \mathcal{M}^+_b(\Omega)$ such that the following equivalent conditions hold for every $\xi \in \Sn$:
\begin{itemize}
\item[(a)] for every $\tau \in C^1(\R)$ with $-\tfrac{1}{2}\leq \tau \leq \tfrac{1}{2}$ and $0\leq \tau'\leq 1$, the partial derivative $\mathrm{D}_\xi\big(\tau(v\cdot \xi)\big)=\mathrm{D}\big(\tau(v\cdot \xi)\big)\cdot \xi$ belongs to $\mathcal{M}_b(\Omega)$, and for every Borel set $B\subset \Omega$ 
\begin{equation*}
\big|\mathrm{D}_\xi\big(\tau(v\cdot \xi)\big)\big|(B)\leq \lambda_v(B);
\end{equation*}
\item[(b)] $\widehat{v}^\xi_y \in BV_{\mathrm{loc}}(\Omega^\xi_y)$ for $\hn$-a.e.\ $y\in \Pi^\xi$, and for every Borel set $B\subset \Omega$ 
\begin{equation}\label{3105171445}
\int \limits_{\Pi_\xi} \Big(\big|\mathrm{D} {\widehat{v}}_y^\xi\big|\big(B^\xi_y\setminus J^1_{{\widehat{v}}^\xi_y}\big)+ \mathcal{H}^0\big(B^\xi_y\cap J^1_{{\widehat{v}}^\xi_y}\big)\Big)\dh(y)\leq \lambda_v(B)\,,
\end{equation}
where
$J^1_{{\widehat{v}}^\xi_y}:=\left\{t\in J_{{\widehat{v}}^\xi_y} : |[{\widehat{v}}_y^\xi]|(t) \geq 1\right\}$.
\end{itemize} 
The function $v$ belongs to $GSBD(\Omega)$ if $v\in GBD(\Omega)$ and $\widehat{v}^\xi_y \in SBV_{\mathrm{loc}}(\Omega^\xi_y)$ for every $\xi \in \Sn$ and for $\hn$-a.e.\ $y\in \Pi^\xi$.
\end{definition}
$GBD(\Omega)$ and $GSBD(\Omega)$ are vector spaces, as stated in \cite[Remark~4.6]{DM13}, and one has the inclusions $BD(\Omega)\subset GBD(\Omega)$, $SBD(\Omega)\subset GSBD(\Omega)$, which are in general strict (see \cite[Remark~4.5 and Example~12.3]{DM13}).
Every $v\in GBD(\Omega)$ has an \emph{approximate symmetric gradient} $e(v)\in L^1(\Omega;\Mnn)$, still characterised by \eqref{1906180942} and \eqref{3105171927}, and
the \emph{approximate jump set} $J_v$ is still countably $(\hn,n{-}1)$-rectifiable (\textit{cf.}~\cite[Theorem~6.2]{DM13}) and can be reconstructed by \eqref{1906181638}
 (see \cite[Theorem~8.1]{DM13}).


\par
\medskip
\paragraph{\bf First order differential operators $\A$ and functions of bounded $\A$-variation.} In this paragraph we recall recent results from \cite{BreDieGme17, GmeRai17, GmeRai18}, starting from the notions of $\R$- and $\C$-ellipticity for operators $\A$ of the form \eqref{1806182316}, introduced in Section~\ref{sec:intro}. Such an operator can be seen as $\A u = A(\E u)$, for $A$ endomorphism on $\Mnn$, as in \eqref{2406180949}.

First (see \cite[Theorem~2.6]{BreDieGme17}) $\A$ is $\C$-elliptic if and only if  the kernel of $\A$, defined by
\begin{equation*}
N(\A)=\{v\in \mathcal{D}'(\Rn;\Rn) \colon \A v \equiv 0\}\,,
\end{equation*}
is finite dimensional and contained in the space of polynomials of degree less than $l= l(\A) \in \N$.
\begin{remark}\label{rem:1906181942}
For the symmetrised gradient $\A v=\E v=\frac{1}{2}(\nabla v + \nabla v^T)$, we have $N(\E)=\{ x \mapsto Mx+b \colon M \in \mathbb{M}^{n{\times}n},\, M=-M^T,\, b\in \Rn\}$. For $\A v=\E_D v= \E v -\frac{1}{n}(\mathrm{div\,}v)\, \mathrm{Id}_n$, if $n\geq 3$ this operator is $\C$-elliptic with
\begin{equation*}
N(\E_D)=\{x\mapsto Mx + b + (2(a \cdot x) x - |x|^2 a) \colon M \in \mathbb{M}^{n{\times}n},\, M=-M^T,\,a,\, b \in \Rn\}\,,
\end{equation*}
while, if $n=2$, $\E_D$ is only $\R$-elliptic and $N(\E_D)$ consists of the holomorphic functions, with the identification $\C \cong \R^2$. (The elements of $N(\E_D)$ are usually called conformal Killing vectors, see \cite{Dai06, FucRep11}.)
\end{remark}
By \cite[Lemma~2.3]{BreDieGme17} if $\A$ is $\R$-elliptic there exist $0<\kappa_1 < \kappa_2 < \infty$ such that
\begin{equation}\label{2306181016}
\kappa_1 |w| |z| \leq |w\, \oa z| \leq \kappa_2 |w| |z| \quad \text{for all }w,\,z \in \R^n\,.
\end{equation}

For every open domain $U\subset \Rn$,
the \emph{total $\A$-variation} of $v\in L^1_{\mathrm{loc}}(U; \Rn)$ is (notice that $\A$ is symmetric)
\begin{equation}\label{2206182352}
|\A v|(U):=\sup\Big\{ \int \limits_U v \cdot \A \varphi \dx \colon  \varphi \in C^1_c(U;\Rn),\, |\varphi|\leq 1 \Big\}\,.
\end{equation}
A function $v\in L^1(U;\Rn)$ is of \emph{bounded $\A$-variation} if $|\A v|(U) < \infty$ and we denote
\begin{equation*}
BV^{\A}(U):= \{v \in L^1(U; \Rn) \colon \A v \in \mathcal{M}_b(U; \Mnn)\}\,.
\end{equation*}
The following proposition collects \cite[Theorem~1.3]{VScha13} (see also \cite[Theorem~1.1]{GmeRai17}), \cite[Proposition~4.2 and Lemma~5.8]{GmeRai17},  \cite[Proposition~2.5]{GmeRai18}, and \cite[Theorem~3]{FucRep11}.
\begin{proposition}\label{prop:1906182148}
Let $U$ be bounded and star-shaped with respect to a ball (that is star-shaped with respect to each point of a ball $B\subset U$). If $\A$ is $\C$-elliptic
then there exist a constant $C>0$ such that
\begin{equation}\label{1906182253}
\|v\|_{L^{1^*}(U;\Rn)}\leq C\, \|\A v\|_{L^1(U;\Mnn)}
\end{equation}
for every $v\in C^1_c(U; \Rn)$, and (denoting by $\hookrightarrow$ and $\hookrightarrow\hookrightarrow$  continuous and compact embeddings, respectively)
\begin{equation}\label{2206182156}
\begin{split}
BV^{\A}(U) &\hookrightarrow L^{1^*}(U;\Rn)\\
BV^{\A}(U) &\hookrightarrow\hookrightarrow L^{p}(U;\Rn) \text{ for every }p \in [1, 1^*)\,.
\end{split}
\end{equation}
 If $\A$ is $\R$-elliptic then for every $p\in [1, 1^*)$ there exist $C_p >0$ such that
\begin{equation}\label{1906182255}
\|v\|_{L^{p}(U;\Rn)}\leq C_p \,\|\A v\|_{L^1(U;\Mnn)}
\end{equation}
for every $v \in C^1_c(U; \Rn)$. 
 Moreover, if $\A$ is $\C$-elliptic then there is $C>0$, depending only on $n$, such that for every $v\in BV^{\A}(U)$
\begin{equation}\label{2206182212}
\|v-\pi_U v\|_{L^{1^*}(U)} \leq C  |\A v|(U)\,,
\end{equation} 
for a suitable $\pi_U v  \in N(\A)$. 
If $n=2$, then for every $p\in [1, 1^*)$ there exists $C>0$ depending only on $p$, such that it holds
\begin{equation}\label{2306180932}
\|v-\pi_U \,v\|_{L^{p}(U)} \leq C \, \mathrm{diam}(U)^{1-n+\frac{n}{p}}  |\E_D v|(U)\,,
\end{equation} 
for some $\pi_U \,v  \in N(\E_D)$, 
namely $\pi_U\, v$ is holomorphic (see Remark~\ref{2206181920}).
\end{proposition}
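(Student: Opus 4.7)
The plan is to establish each claim using Fourier-analytic tools together with standard compactness arguments, exploiting in a crucial way the two defining properties of $\C$-ellipticity of $\A$: the finite-dimensionality of $N(\A)$ and the invertibility of the symbol $\A[\xi]$ on the complex sphere.

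For the Gagliardo--Nirenberg--Sobolev-type estimate \eqref{1906182253} on test functions, the plan is to construct a parametrix for $\A$ on all of $\Rn$. Because $\A[\xi]$ is injective for every complex $\xi\neq 0$, the matrix $\A[\xi]^*\A[\xi]$ is positive definite and homogeneous of degree $2$, and one can define a Fourier multiplier $E$ with symbol $(\A[\xi]^*\A[\xi])^{-1}\A[\xi]^*$, which is smooth off the origin and homogeneous of degree $-1$. Thus $E*\A v = v$ for $v\in C^1_c(U;\Rn)$ (extended by zero). At the endpoint the Hardy--Littlewood--Sobolev inequality does not apply, but $\C$-ellipticity guarantees the \emph{cancellation} condition $\bigcap_{\xi\neq 0}\A[\xi](\R^n)\subsetneq \Mnn$, so van Schaftingen's endpoint inequality gives $\|v\|_{L^{1^*}}\leq C\|\A v\|_{L^1}$.

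The embeddings \eqref{2206182156} for $BV^{\A}(U)$ are then obtained via mollification: for $v\in BV^{\A}(U)$ and $v_\delta=v*\rho_\delta$ one has $\A v_\delta=(\A v)*\rho_\delta$ with bounded $L^1$-mass, so \eqref{1906182253} applied on a slightly enlarged star-shaped domain followed by lower semicontinuity yields the continuous embedding into $L^{1^*}$; combining the uniform $L^{1^*}$ bound with a Fréchet--Kolmogorov argument on difference quotients (controlled by $|\A v|(U)$) gives the compact embedding into $L^p$ for $p<1^*$. For the merely $\R$-elliptic case \eqref{1906182255}, the endpoint is forbidden by Ornstein's non-inequality, but one recovers any $p<1^*$ by using that $|v|\lesssim I_1(|\A v|)$ in a weak sense, where $I_1$ is the Riesz potential, and Marcinkiewicz-interpolating the weak-$L^{1^*}$ endpoint.

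The Poincaré--Korn estimate \eqref{2206182212} I would prove by a contradiction-compactness argument: were it to fail, normalize $\|v_k-\pi_U v_k\|_{L^{1^*}}=1$ with $|\A v_k|(U)\to 0$, choosing $\pi_U$ as an $L^1$-projection onto the finite-dimensional space $N(\A)$; the compact embedding from \eqref{2206182156} extracts a limit $v_\infty$ with $\A v_\infty=0$ and hence $v_\infty\in N(\A)$, contradicting the normalization. The dimension-two estimate \eqref{2306180932} for $\A=\E_D$ uses the identification of $N(\E_D)$ with holomorphic functions (Remark~\ref{rem:1906181942}): one argues as above on the unit ball using Cauchy-kernel reconstruction of holomorphic deviations, and then rescales, the exponent $\mathrm{diam}(U)^{1-n+n/p}$ being dictated by scaling homogeneity of $|\E_D v|$ versus $\|v\|_{L^p}$.

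The main obstacle will be the endpoint inequality \eqref{1906182253}: standard Calderón--Zygmund theory fails at $L^1$, and the cancellation structure forced by $\C$-ellipticity (in contrast with mere $\R$-ellipticity, which is not enough, as \eqref{1906182255} shows) must be used via van Schaftingen's framework. Everything else reduces to this inequality plus the finite-dimensionality of $N(\A)$ and a soft compactness argument.
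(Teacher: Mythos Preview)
The paper does not prove this proposition: it is explicitly introduced as a collection of results from the literature (van Schaftingen \cite{VScha13}, Gmeineder--Rai\c{t}\u{a} \cite{GmeRai17,GmeRai18}, Fuchs--Repin \cite{FucRep11}), with no argument supplied. Your sketch is therefore not comparable to anything in the paper itself, but it does track the strategies of those references reasonably well, in particular the use of van Schaftingen's endpoint theory for \eqref{1906182253} and the finite-dimensionality of $N(\A)$ for the Poincar\'e-type estimate.

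Two technical points deserve correction. First, the cancellation condition you invoke is not $\bigcap_{\xi\neq 0}\A[\xi](\Rn)\subsetneq \Mnn$ but the stronger $\bigcap_{\xi\neq 0}\A[\xi](\Rn)=\{0\}$; this is what $\C$-ellipticity actually forces and what van Schaftingen's theorem requires. Second, your contradiction--compactness argument for \eqref{2206182212} has a genuine gap: you normalise in $L^{1^*}$ but the compact embedding in \eqref{2206182156} only yields convergence in $L^p$ for $p<1^*$, so obtaining a limit $v_\infty=0$ in $L^p$ does not by itself contradict $\|v_k-\pi_U v_k\|_{L^{1^*}}=1$. The standard fix is to run the compactness argument in $L^1$ to get $\|v-\pi_U v\|_{L^1}\leq C\,|\A v|(U)$ first, and then upgrade to $L^{1^*}$ by applying the continuous embedding $BV^{\A}(U)\hookrightarrow L^{1^*}$ to $v-\pi_U v$ (noting $\A(\pi_U v)=0$). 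Similarly, your mollification route to \eqref{2206182156} needs more care since $v*\rho_\delta$ is not compactly supported in $U$; the cited references handle this on star-shaped domains either via a direct integral representation formula or by dilating towards the centre of the ball before mollifying.
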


\begin{remark}\label{2206181920}
In \cite{VScha13} it is proven that \eqref{1906182253} is equivalent to the fact that $\A$ is $\R$-elliptic and cancelling, a weaker property than $\C$-ellipticity. For $n=2$, we have that $\E_D$ is only $\R$-elliptic but not cancelling, so only \eqref{1906182255} holds, and $N(\E_D)$ can be identified with the space of holomorphic functions (see \cite[Example~2.4 c)]{GmeRai17}).
\end{remark}

\begin{remark}\label{1912182103}
The estimates \eqref{2206182212} and \eqref{2306180932} may be extended to any connected set $U$ finite union of sets $U_i$ which are bounded and star-shaped with respect to a ball. Indeed, since $N(\A)$ is made of polynomials and due to \eqref{2206182212}, one can find $\pi_U\in N(\A)$ such that $\|\pi_{U_i} v - \pi_U v\|_{L^{1^*}(U)} \leq C |\A v|(U)$ for any $i$. This is true, for $\pi_{U_j}v$ in place of $\pi_U v$, for any $U_i$, $U_j$ with $\mathcal{L}^n(U_i\cap U_j)>0$, by rigidity of polynomials, and it is extended to a finite union. As for \eqref{2306180932}, see \cite[comment before Theorem~3]{FucRep11}.
In particular, one sees that \eqref{2206182212} and \eqref{2306180932} hold if $U$ is a connected Lipschitz domain.
\end{remark}

We prove below the main result of the section.
\begin{theorem}\label{teo:GSBDdiventaSBD}
Let $U\subset \Rn$ be an open 
 bounded 
domain. If $\A$ as in \eqref{1806182316} (i.e., $\A$ symmetric) is $\C$-elliptic, then
\begin{subequations}
\begin{equation}\label{0208181353}
\begin{split}
GBD(U) \cap BV^\A(U) = BD(U)
\end{split}
\end{equation}
and
\begin{equation}\label{0208181354}
\begin{split}
GSBD(U) \cap BV^\A(U) = SBD(U)\,.
\end{split}
\end{equation}
\end{subequations}
\end{theorem}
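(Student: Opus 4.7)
The inclusions $\supset$ in both~\eqref{0208181353} and~\eqref{0208181354} are immediate: $BD(U) \subset GBD(U)$ and $SBD(U) \subset GSBD(U)$ by definition, and for $v\in BD(U)$ one has $\A v = A(\E v) \in \mathcal{M}_b(U;\Mnn)$ since $A$ is a bounded linear endomorphism of $\Mnn$. The content is the reverse inclusion. Once we prove $GBD(U)\cap BV^{\A}(U)\subset BD(U)$, the $SBD$ version follows: if $v\in GSBD\cap BD$ then its slices $\widehat{v}^\xi_y$ are in $SBV_{\mathrm{loc}}(U^\xi_y)$ by the $GSBD$ assumption, and, being in $BV$ with finite total variation by the argument below, they are automatically in $SBV$, so Proposition~\ref{prop:1906180944} yields $v\in SBD$. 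Fix then $v\in GBD(U)\cap BV^{\A}(U)$.

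The heart of the proof is the jump identity
\begin{equation*}
\A v \mres J_v \;=\; [v] \oa \nu_v \; \hn \mres J_v\,,
\end{equation*}
which I would establish by localisation at $\hn$-a.e.\ $x_0 \in J_v$. By the $(\hn,n{-}1)$-rectifiability of $J_v$ in $GBD$ (\cite[Theorem~6.2]{DM13}), cover $J_v$ up to an $\hn$-null set by countably many $C^1$-hypersurfaces $\Gamma_k$. At a generic $x_0\in\Gamma_k\cap J_v$, where the approximate limits $v^\pm(x_0)$ and the normal $\nu_v(x_0)$ exist, fix a small ball $B_r(x_0)$ split by the tangent hyperplane $\Pi$ into half-balls $B_r^\pm$. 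Since $\A v$ restricts to a bounded measure on each half-ball, $v\in BV^{\A}(B_r^\pm)$; because $\A$ is $\C$-elliptic, the trace theory developed in \cite{BreDieGme17, GmeRai17, GmeRai18} (which is ultimately rooted in the Poincar\'e--Korn inequality \eqref{2206182212}) provides traces $\tau^\pm v\in L^1(\Pi\cap B_r;\Rn)$. A blow-up argument matches the $L^1$-characterisation of these $BV^{\A}$ traces with the one-sided approximate limits of Definition~\ref{def:2306181018}, giving $\tau^\pm v(x)=v^\pm(x)$ for $\hn$-a.e.\ $x\in\Pi\cap J_v$. Applying the Gauss--Green formula for $\A$ on $B_r^\pm$ and subtracting yields $\A v\mres(B_r(x_0)\cap\Gamma_k)=(\tau^+ v-\tau^- v)\oa\nu_v\,\hn=[v]\oa\nu_v\,\hn$, and a standard covering argument gives the identity globally on $J_v$.

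The $\R$-ellipticity estimate~\eqref{2306181016} (a consequence of $\C$-ellipticity) then gives
\begin{equation*}
\kappa_1 \int \limits_{J_v} |[v]| \dh \;\leq\; \int \limits_{J_v} |[v] \oa \nu_v| \dh \;=\; |\A v \mres J_v|(U) \;\leq\; |\A v|(U) \;<\; +\infty\,,
\end{equation*}
so $[v]\in L^1(J_v;\Rn)$. To conclude $v\in BD(U)$ I would verify the slicing criterion~\eqref{2306181022} of Proposition~\ref{prop:1906180944} for $\xi=e_i+e_j$. Each slice $\widehat{v}^\xi_y$ is in $BV_{\mathrm{loc}}(U^\xi_y)$ by $GBD$; its variation off the large-jump set $J^1_{\widehat{v}^\xi_y}$ integrates in $y$ to at most $\lambda_v(U)<\infty$ by \eqref{3105171445}, while by~\eqref{1906181638} one has $[\widehat{v}^\xi_y](t)=[v](y+t\xi)\cdot\xi$, and the area formula applied to the projection $J_v\to\Pi^\xi$ yields
\begin{equation*}
\int \limits_{\Pi^\xi} \sum_{t\in J_{\widehat{v}^\xi_y}} |[\widehat{v}^\xi_y](t)| \dh(y) \;=\; \int \limits_{J_v} |[v]\cdot\xi|\,|\nu_v\cdot\xi| \dh \;\leq\; \int \limits_{J_v} |[v]| \dh \;<\; +\infty\,.
\end{equation*}
Summing the two bounds gives $\int_{\Pi^\xi}|\mathrm{D}\widehat{v}^\xi_y|(U^\xi_y)\dh(y)<+\infty$, and hence $v\in BD(U)$.

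The main obstacle is the jump formula of Step~1: without $\C$-ellipticity the trace theory for $BV^{\A}$ collapses (as in the case $n=2$, $\A=\E_D$, cf.\ Remarks~\ref{rem:1906181942} and~\ref{2206181920}), so one cannot interpret the $GBD$ one-sided approximate limits $v^\pm$ as genuine boundary values of $v$ on either side of $J_v$. The delicate matching of these two a priori unrelated notions of one-sided limit, at $\hn$-a.e.\ point of $J_v$, via blow-up and rescaling governed by~\eqref{2206182212}, is the nontrivial technical step; once it is in hand, Steps~2 and~3 are essentially bookkeeping.
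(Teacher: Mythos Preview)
Your approach is correct and reaches the same conclusion, but it takes a different route from the paper. The paper never invokes trace theory or Gauss--Green; instead it runs a direct blow-up at $\hn$-a.e.\ $x\in J_u$, proving only the \emph{inequality} $\frac{\dd |\A u|}{\dd(\hn\mres J_u)}\geq |[u]\oa\nu_u|$ via lower semicontinuity of the total $\A$-variation under the $L^1$-convergence $u_\varrho\to u_0$. The key technical point---upgrading the $GBD$ convergence in measure of the rescalings $u_\varrho$ to strong $L^1$---comes from the Poincar\'e inequality \eqref{2206182212} together with the finite-dimensionality of $N(\A)$ (polynomials of bounded degree), which forces the compensators $\pi_\varrho\in N(\A)$ to converge uniformly. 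That inequality already yields $[u]\in L^1(J_u)$ via \eqref{2306181016}, after which your Steps~2 and~3 coincide with the paper's. The full identity $\A u\mres J_u=[u]\oa\nu_u\,\hn$ is stated separately as Corollary~\ref{cor:2306181058}, derived \emph{a posteriori} by applying $A$ to \eqref{1205181701} once $u\in BD$ is known.

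Your route would deliver this identity directly, which is a mild bonus; but the ``blow-up argument'' you invoke to match the $BV^{\A}$-traces $\tau^\pm v$ with the $GBD$ approximate limits $v^\pm$ is essentially the \emph{same} blow-up the paper performs (again using \eqref{2206182212} and the polynomial kernel for $L^1$ compactness of rescalings), so the technical cores coincide. Two small corrections: you should split the ball by the $C^1$ hypersurface $\Gamma_k$ itself rather than by its tangent hyperplane $\Pi$, since at fixed scale $r$ the jump of $v$ lives on $\Gamma_k\cap B_r$, not on $\Pi\cap B_r$, and the Gauss--Green subtraction must be performed across $\Gamma_k$; and the pointwise identification $\tau^\pm v=v^\pm$ is precisely the non-obvious step---it amounts to a Lebesgue-point property of the $BV^{\A}$-trace (cf.\ \cite{Bab15} for $BD$), which is not an immediate corollary of the abstract trace theorem and again reduces to the blow-up above.
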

\begin{proof}
By \eqref{2306181022}, we have that from \eqref{0208181353} one gets \eqref{0208181354}. It is also immediate that $BD(U) \subset GBD(U) \cap BV^\A(U)$, being $\A$ symmetric.  
In order to show the 
opposite inclusion, let us fix $u\in GBD(U) \cap BV^\A(U)$ and first prove (in the spirit of the blow up technique \cite{FonMue92}) that 
\begin{equation}\label{2306180007}
\frac{\dd |\A u|}{\dh \mres J_u} \geq \big| [u] \oa \nu_u \big| \quad \hn\text{-a.e.\ in } J_u\,. 
\end{equation}
 Since $J_u$ is countably rectifiable, so that $\hn\mres J_u$ is $\sigma$-finite, and $\A u \in \mathcal{M}_b(U; \Mnn)$, the Radon-Nikodym derivative of $|\A u|$ with respect to $\hn\mres J_u$ exists (more precisely, it is the function $\theta \in L^1(J_u)$ such that $|\A u^a|= \theta \hn\mres J_u$, where $\A u=\A u^a +\A u^s$, for $\A u^a\ll \hn\mres J_u$, $\A u^s \perp \hn\mres J_u$). Moreover, it may be computed explicitly by (see e.g.\ \cite[Theorems~1.28 and 2.83]{AFP}):
 
\begin{equation}\label{2206182054}
\frac{\dd |\A u|}{\dh \mres J_u}(x)=\lim_{\varrho \to 0} \frac{|\A u|(B_\varrho(x))}{\hn(J_u \cap B_\varrho(x))} \quad\text{for $\hn$-a.e.\ $x \in J_u$}\,.
\end{equation}
For $\hn$-a.e.\ $x \in J_u$, we have also that
\begin{equation}\label{2206182055}
\lim_{\varrho\to 0} \frac{\hn(J_u \cap B_\varrho(x))}{\omega_{n-1}\, \varrho^{n-1}}=1\,,
\end{equation}
for $\omega_{n-1}$ the $n{-}1$-dimensional measure of the unit ball in $\R^{n-1}$, and that, if we 
introduce 
$u_{\varrho,x}(y):=u(x+ \varrho y) \colon B_1(0) \to \R^n$, then (denoting $B:= B_1(0)$)
\begin{equation}\label{2206182056}
\lim_{\varrho\to 0^+} u_{\varrho,x} \to u_0:=u^+(x)\chi_{B^+} + u^-(x) \chi_{B^-}\,\quad \text{in $\mathcal{L}^n$-measure in $B$},
\end{equation}
where $u^\pm(x) \in \Rn$ are the Lebesgue limits at $x$ on the two sides of $J_u$ with respect to $\nu_u(x)$, and $B^{\pm}:=B \cap \{y \in \Rn \colon (y-x) \cdot \nu_u(x) \in \R^\pm\}$  
(see also e.g.\ \cite[Theorem~6.2, below (6.4)]{DM13}). Let us fix $x$ such that these three conditions hold, and denote $u_\varrho \equiv u_{\varrho,x}$.

Since the derivative in \eqref{2206182054} exists finite, by \eqref{2206182055} and the fact that
\begin{equation*}
|\A u_\varrho|(B)= \frac{|\A u|(B_\varrho(x))}{\varrho^{n-1}}\,,
\end{equation*}
we obtain that there exists $C>0$ independent of $\varrho$ such that
\begin{equation}\label{2206182111}
|\A u_\varrho|(B) \leq C\,.
\end{equation}
By the embeddings \eqref{2206182156} we get that $\|u_\varrho\|_{L^1(B)} = \varrho^{-n}\|u\|_{L^1(B_\varrho(x))}< \infty$, so that $u_\varrho \in BV^\A(B)$ for any $\varrho>0$ and \eqref{2206182212}, \eqref{2206182111} imply
\begin{equation}\label{2206182213}
\|u_\varrho - \pi_\varrho\|_{L^{1^*}(B)}\leq C\,,
\end{equation}
where $\pi_\varrho:= \pi_B u_\varrho$. This gives that $(u_\varrho - \pi_\varrho)_\varrho$ is bounded in $BV^{\A}(B)$. Then, by \eqref{2206182156}, up to a (not relabelled) subsequence, $u_\varrho - \pi_\varrho \to \tilde{v} \in \Rn$ a.e.\ in $B$. Recalling \eqref{2206182056}, $\pi_\varrho$ belong to the finite dimensional space of polynomials $N(\A)$ of degree less than $l(\A) \in \N$ (being $\A$ elliptic, cf.\ before Remark~\ref{rem:1906181942}) and converge $\mathcal{L}^n$-a.e.\ in $B$. Therefore
$\pi_\varrho$ converge uniformly to a suitable polynomial $\pi_0$ (indeed, if $\|\pi_\varrho\| \to \infty$, for any norm on the finite dimensional space of polynomials of degree less than $l(\A)$, then $\frac{\pi_\varrho}{\|\pi_\varrho\|}$ converges to a polynomial of degree less than $l(\A)$, so $|\pi_\varrho|$ converges to $+\infty$ up to a $\mathcal{L}^n$-negligible set). 

By difference we obtain that the convergence in \eqref{2206182056} is strong in $L^1(B;\Rn)$, passing 
to a suitable subsequence $\varrho_k$. Looking at the definition of $|\A u|$ in \eqref{2206182352}, we deduce immediately the lower semicontinuity with respect to $L^1$-convergence of $u_\varrho$, so by \eqref{2206182054}, \eqref{2206182055}
\begin{equation*}
\begin{split}
|\A u_0|(B) &\leq \liminf_{k \to \infty} |\A u_{\varrho_k}|(B) = \liminf_{k \to \infty} \frac{|\A u|(B_{\varrho_k}(x))}{(\varrho_k)^{n-1}}\\ &= \lim_{\varrho \to 0} \frac{|\A u|(B_{\varrho}(x))}{\varrho^{n-1}}= \omega_{n-1}  \frac{\dd |\A u|}{\dh \mres J_u}(x)\,.
\end{split}
\end{equation*}
By the special form of $u_0$ (see \eqref{2206182056}), we have directly that
\begin{equation*}
|\A u_0|(B) = \omega_{n-1} \, \big| [u](x) \oa \nu_u(x) \big|\,.
\end{equation*}
This proves the claim \eqref{2306180007}.

Combining \eqref{2306180007} with \eqref{2306181016} (recall that $\A u$ has bounded variation) we obtain that
\begin{equation*}
[u] \in L^1(J_u; \Rn)\,,
\end{equation*}
for $[u]=u^+-u^-$, where $u^\pm$ are the Lebesgue limits in the sense of $GBD$, cf.\ Definition~\ref{def:2306181018}.

It is now possible to fill the gap between the slicing conditions \eqref{3105171445} for $G(S)BD$ and the characterisation of $(S)BD$ functions \eqref{2306181022}, by the area formula for rectifiable sets (see e.g.\ \cite[(12.4) in Section~12]{Sim84}). Since $J_u$ is countably $(\hn, n{-}1)$-rectifiable and $\nu_u \cdot \xi$ is the Jacobian of the projection $p_\xi \colon J_u \to \Pi^\xi$ (we consider $\nu_u \cdot \xi \geq 0$) we obtain for any $\xi \in \Sn$
\begin{equation*}
\begin{split}
\int \limits_{J^\xi_u} \big| [u] \cdot \xi \big|\, (\nu_u \cdot \xi) \dh = \int \limits_{\Pi^\xi} \sum_{t\in (J^\xi_u)^\xi_y} \big| [u] (y+t\xi) \cdot \xi \big| \dh = \int \limits_{\Pi^\xi} \sum_{t\in J_{\widehat{u}^\xi_y}} \big| [\widehat{u}^\xi_y] \big|(t) \dh \,,
\end{split}
\end{equation*} 
recalling that \eqref{1906181638} holds also for $u\in GBD(U)$. Employing \eqref{1906181642} and \eqref{3105171445} in Definition~\ref{def:GBD} (now $\widehat{u}^\xi_y \in SBV_{\mathrm{loc}}(U^\xi_y)$ for $\hn$-a.e.\ $\xi$ if $u\in GSBD(U)$), and the fact that \eqref{3105171927} holds both in $(S)BD$ and $G(S)BD$, we get \eqref{2306181022} and then $u\in BD(U)$.
This concludes the proof.
\end{proof}

\begin{remark}\label{2907181935}
For $n=2$ and $\A=\E_D$ we are not able to deduce that $GBD \cap BV^{\E_D} = BD$ as above, the issue being property \eqref{2306180007} (if this was true, then we would conclude by using \eqref{2306181016}). Indeed, in this case $N(\E_D)$ consists of the holomorphic functions (with the identification $\C \cong \R^2$); employing \eqref{2306180932} instead of \eqref{2206182212} we get \eqref{2206182213} with any fixed $p\in [1, 1^*)$ in place of $1^*$, where $\pi_\varrho$ is holomorphic. Now the problem is that it is not true that the 
$\mathcal{L}^n$-a.e.\ convergence of $\pi_\varrho$ to $\pi_0:= u_0 - \tilde{v}$ takes place also in $L^1$: in general, the convergence is locally uniform just on an open dense subset of $B_1(0)$ (by Osgood's theorem \cite{Osgood1901}).
\end{remark}
%
%
\begin{corollary}\label{cor:2306181058}
If $\A$ is an operator as in Theorem~\ref{teo:GSBDdiventaSBD} and $u\in GBD(U)\cap BV^{\A}(U)$, then
\begin{equation*}
\A u \mres J_u = [u] \oa \nu_u \, \hn \mres J_u\,,
\end{equation*}
applying the operator $\A$ to \eqref{1205181701}.
\end{corollary}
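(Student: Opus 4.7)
The plan is to reduce immediately to the setting of classical $BD$ functions by invoking Theorem~\ref{teo:GSBDdiventaSBD}, and then to apply the pointwise endomorphism $A$ of $\Mnn$ to the Lebesgue decomposition of $\E u$. Indeed, the hypothesis $u\in GBD(U)\cap BV^{\A}(U)$ together with the $\C$-ellipticity of $\A$ forces $u\in BD(U)$, so we may write
\[
\E u = \E^a u + \E^c u + \E^j u,\qquad \E^j u = [u]\odot \nu_u \,\hn\mres J_u,
\]
by \eqref{1205181701}. Since $\A u = A(\E u)$ with $A$ linear on $\Mnn$, the matrix-valued measure $\A u$ splits accordingly as $A(\E^a u) + A(\E^c u) + A(\E^j u)$.

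The next step is to restrict this decomposition to $J_u$. The absolutely continuous part $A(\E^a u)$ is absolutely continuous with respect to $\mathcal{L}^n$, hence it vanishes on $J_u$, which has zero Lebesgue measure. The Cantor part $A(\E^c u)$ is controlled in total variation by $|\E^c u|$; since $J_u$ is countably $(\hn,n{-}1)$-rectifiable, it is $\sigma$-finite with respect to $\hn$, and the defining property of $\E^c u$ gives $|\E^c u|(J_u)=0$, whence $A(\E^c u)\mres J_u = 0$. Therefore $\A u \mres J_u = A(\E^j u)$.

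Finally, by \eqref{1906181746} the endomorphism $A$ satisfies $A([v]\odot z) = v\oa z$ for all $v,z\in \Rn$, which applied pointwise $\hn$-a.e.\ on $J_u$ yields
\[
A(\E^j u) = A\bigl([u]\odot\nu_u\bigr)\,\hn\mres J_u = [u]\oa \nu_u\,\hn\mres J_u,
\]
and this is precisely the claimed identity. There is no real obstacle here: the only non-trivial ingredient is Theorem~\ref{teo:GSBDdiventaSBD} itself, which legitimises working with the standard $BD$ decomposition; the rest is a direct computation using the linearity of $A$ and the identification of $A$ with the tensor product $\oa$ via \eqref{1906181746}.
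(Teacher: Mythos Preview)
Your proof is correct and follows exactly the route indicated by the paper: the corollary is stated with the proof hint ``applying the operator $\A$ to \eqref{1205181701}'', and you have simply spelled out what that means---invoke Theorem~\ref{teo:GSBDdiventaSBD} to land in $BD(U)$, restrict the Lebesgue decomposition of $\E u$ to $J_u$, and use \eqref{1906181746} to rewrite $A([u]\odot\nu_u)$ as $[u]\oa\nu_u$. There is nothing to add.
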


\section{$\Gamma$-$\liminf$ inequality}

 Let us fix a sequence $\varepsilon_k$ and denote by $D_k$ the functionals $D_{\varepsilon_k}$, with analogous notation for all the quantities depending on $\varepsilon$.
We consider an open bounded domain $\Omega'\subset \Rn$ such that $\Omega\subset \Omega'$ and  $\Omega' \cap \dom =\dod$  and set, for each $u$, $v$ defined in $\Omega$, their extensions
\begin{equation*}
\begin{split}
\tilde{u}:=\begin{dcases}
u &\text{ in }\Omega\,,\\
u_0 &\text{ in }\Omega' \sm \Omega\,,
\end{dcases}
\qquad
\tilde{v}:=\begin{dcases}
v &\text{ in }\Omega\,,\\
1 &\text{ in }\Omega' \sm \Omega\,,
\end{dcases}
\end{split}
\end{equation*}
Then we have that
\begin{equation}\label{2506180936}
\begin{split}
& D_k^{\Omega'}(\tilde{u},\tilde{v})- D_k^{\Omega}(u,v) = D^{\Omega'}(\tilde{u},{v})- D^{\Omega}(u,v) \\& \hspace{1em} = \int \limits_{\Omega' \sm \Omega} \Big[ ( 1 + \varepsilon_k^{p-1}) f_p(\A u_0) + ( 1  + \eta_{\varepsilon_k})\, f_p(e(u_0)- \A u_0) \Big] \dx\,, 
\end{split}
\end{equation}
where $D_k^\Omega$, $D^\Omega$ and $D_k^{\Omega'}$, $D^{\Omega'}$ are the functionals $D_k$ and $D$ with the integrals evaluated on $\Omega$ and $\Omega'$. Then it is enough to argue in the enlarged domain $\Omega'$. We denote $\tilde{D}_k:=D^{\Omega'}_k$, $\tilde{D}:= D^{\Omega'}$.
 
%
 First we prove that for given sequences $u_k$, $v_k$ converging in $\mathcal{L}^n$-measure to some $u\colon \Omega\to \Rn$, $v\colon \Omega \to \R$ measurable, such that $D_k(u_k, v_k) < \infty$,   that is 
 \begin{equation}\label{2306181225}
\tilde{D}_k(\tilde{u}_k, \tilde{v}_k) < \infty\,,
 \end{equation}
  (we may assume without loss of generality that 
$D(u_k, v_k)$, and then $\tilde{D}(\tilde u_k,\tilde v_k)$, converges to some finite  limit)   we have
 \begin{equation}\label{2306181226}
 \tilde{u}\in GSBD^p(\Omega') \cap BV^{\A}(\Omega') \quad\text{and} \quad \tilde{v}=1 \text{ a.e.\ in } \Omega'\,.
 \end{equation}
Since $\tilde{D}_k(\tilde{u}_k,\tilde{v}_k) \geq \int \limits_{\Omega} \frac{\psi(\tilde{v}_k)}{\varepsilon_k} \dx$ and $\psi$ is decreasing with $\psi(1)=0$,
we get readily that $\tilde{v}=1$ a.e.\ in $\Omega'$ and $\tilde{v}_k \to 1$ in $\mathcal{L}^n$-measure. As for $u$, recalling the assumptions on $f_p$, we have that for any $\lambda\in [0,1)$ 
\begin{equation}\label{2607181709}
\begin{split}
\tilde{D}_k(\tilde{u}_k, \tilde{v}_k) &\geq C_{f_p}  \int \limits_{\Omega' } \big[ \lambda |A(e(\tilde{u}_k))|^p \chi_{\{ \tilde{v}_k \geq \lambda\}} + \varepsilon_k^{p-1} |A(e(\tilde{u}_k))|^p \chi_{ \{ \tilde{v}_k \leq \lambda\} }\big] \dx \\
& \hspace{3em}+ \frac{\psi(\lambda)}{\varepsilon_k} \mathcal{L}^n( \{ \tilde{v}_k \leq \lambda\} ) - C_{f_p} \, \mathcal{L}^n(\Omega') \\&
\geq \widetilde{C}_{f_p, p} \Big[ \lambda \hspace{-0.7em}\int \limits_{ \{ \tilde{v}_k \geq \lambda\} } \hspace{-0.7em} |A(e(\tilde{u}_k))|^p \dx + p^{\frac{1}{p}}(p')^{\frac{1}{p'}} \psi(\lambda)^{\frac{1}{p'}} \hspace{-0.7em} \int \limits_{ \{ \tilde{v}_k \leq \lambda\} } \hspace{-0.7em} |A(e(\tilde{u}_k))| \dx \Big]  - C_{f_p} \, \mathcal{L}^n(\Omega') \\&
\geq \widetilde{C} \int \limits_{\Omega'} |A(e(\tilde{u}_k))| \dx - \widehat{C}\,,
\end{split}
\end{equation}
for suitable $\widetilde{C}$, $\widehat{C}$ depending on $f_p$, $p$, $\psi$, $\mathcal{L}^n(\Omega)$, and $\lambda$.
Notice that we have employed the operator $A$  in \eqref{2406180949} to underline the dependence on the absolutely continuous part $e(u_k)$ and used the Young inequality for the second estimate above. 

Since $\tilde{u}_k \in L^1(\Omega';\Rn)$, from \eqref{2206182212} and Remark~\ref{1912182103} (we may assume here $\Omega'$ connected, arguing for each connected component of $\Omega$) we get that there are suitable $\pi_{\Omega'} \tilde{u}_k \in N(\A)$ for which 
\[
\|\tilde{u}_k-\pi_{\Omega'} \tilde{u}_k\|_{L^{1^*}(\Omega')} \leq \|\A(e(\tilde{u}_k))\|_{L^1(\Omega')}\,.
\]
By \eqref{2607181709} and the compact embedding in \eqref{2206182156}, up to a subsequence the functions
$\tilde{u}_k-\pi_{\Omega'} \tilde{u}_k$ converge 
strongly in $L^1(\Omega';\Rn)$. Since $\tilde{u}_k$ converge in measure to $\tilde{u}$, then $\pi_{\Omega'} \tilde{u}_k$ converge uniformly to a polynomial in $N(\A)$ (see the proof of Theorem~\ref{teo:GSBDdiventaSBD}), and 
then,
for every $p\in [1,1^*)$,

\begin{equation}\label{2306181618}
\tilde{u}_k \to \tilde{u} \in BV^{\A}(\Omega') \quad\text{ in }L^p(\Omega';\Rn)\,,\qquad \A \tilde{u}_k \wstar \A \tilde{u} \quad\text{ in }\mathcal{M}_b(\Omega';\Mnn)\,.
\end{equation}

Let us now prove that $\tilde{u}$ is in $GSBD^p(\Omega')$, 
employing the two terms depending only on $v$ in $D_k$ to estimate the measure of $J_{\tilde{u}}$. Consider the function $\phi(t):= \int_0^t \psi^{\frac{1}{q'}}(s) \,\dd s$ for $t\in [0,1]$.
By the Young inequality $\frac{\alpha^q}{q}+\frac{\beta^{q'}}{q'}\geq \alpha \, \beta$ for
\begin{equation}\label{2407182322}
\alpha=\big(\gamma\, q \,\varepsilon_k^{q-1} |\nabla v_k|^q \big)^{1/q}\,,\qquad \beta= \big( q' \, \psi(v_k) \,\varepsilon_k^{-1} \big)^{1/q'}\,,
\end{equation}
we find that for any $\lambda \in (0,1)$ 
\begin{equation}\label{2406181703}
\begin{split}
\tilde{D}_k(\tilde{u}_k, \tilde{v}_k)  \geq \hspace{-0.7em}\int \limits_{ \{\tilde{v}_k > \lambda\} } &\hspace{-0.7em}\Big[ \frac{\psi(\tilde{v}_k)}{\varepsilon_k} + \gamma \varepsilon_k^{q-1}|\nabla \tilde{v}_k|^q \Big] \dx  
 \geq q'^{\frac{1}{q'}} (\gamma\, q)^{\frac{1}{q}} \int \limits_{ \{\tilde{v}_k > \lambda\} } \psi^{\frac{1}{q'}}(\tilde{v}_k) |\nabla \tilde{v}_k| \dx 
 \\&
 = q'^{\frac{1}{q'}} (\gamma\, q)^{\frac{1}{q}} \int\limits_{ \{\tilde{v}_k > \lambda\} } | \nabla(\phi(\tilde{v}_k))| \dx \\
 & = q'^{\frac{1}{q'}} (\gamma\, q)^{\frac{1}{q}} \int_{\phi(\lambda)}^{\phi(1)} \hn(\partial^* \{\phi(\tilde{v}_k) > s\} ) \dd s\,,
\end{split}
\end{equation}
employing the Coarea formula for $\phi(\tilde{v}_k)$.
Therefore, fixed $\lambda \in (0,1)$,
for any $\lambda' \in (\lambda,1)$ the Mean Value theorem guarantees the existence of $\tilde{\lambda}_k \in (\lambda, \lambda')$ such that (notice that $\phi$ is strictly increasing)
\begin{equation*}
\begin{split}
\hn&(\partial^*\{\tilde{v}_k > \lambda_k \}) = \hn(\partial^*\{\phi(\tilde{v}_k) > \phi(\lambda_k)\}) \\& \leq (\phi(\lambda') - \phi(\lambda))^{-1} \int _{\phi(\lambda)}^{\phi(\lambda')} \hn(\partial^* \{\phi(\tilde{v}_k) > s\} ) \dd s < C\,.
\end{split}
\end{equation*}
It follows that the functions $\widehat{u}_k:= \tilde{u}_k \,\chi_{ \{\tilde{v}_k > \lambda_k \} }$ satisfy
\begin{equation*}
\E \widehat{u}_k = e(\tilde{u}_k) \,\chi_{ \{\tilde{v}_k > \lambda_k \} } \mathcal{L}^n \,+\, \tilde{u}_k \odot \nu_{\partial^*\{ \tilde{v}_k > \lambda_k\}  } \hn \mres \partial^* \{\tilde{v}_k > \lambda_k \} \,.
\end{equation*}
Since $\lambda_k \geq \lambda>0$, we get a control for $e(\tilde{u}_k)$ in $L^p$, and with the estimate above this gives  
\begin{equation}\label{2306181656}
\int \limits_\Omega |e(\widehat{u}_k)|^p \dx + \hn(J_{\widehat{u}_k}) \leq C\,,
\end{equation}
and \[\mathcal{L}^n(\{ \tilde{u}_k \neq \widehat{u}_k\})= \mathcal{L}^n(\{ \tilde{v}_k \leq \lambda_k\}) \leq \mathcal{L}^n(\{ \tilde{v}_k \leq \lambda'\}) \leq \varepsilon_k \frac{\tilde{D}_k(\tilde{u}_k, \tilde{v}_k)}{\mathcal{L}^n(\Omega)\, \psi(\lambda')} \to 0\,,\] so that
\begin{equation}\label{2306181703}
\widehat{u}_k \to \tilde{u} \quad\text{ in $\mathcal{L}^n$-measure in }\Omega'\,.
\end{equation}
 By \eqref{2306181656} and \eqref{2306181703} (this latter condition implies that there exists a continuous function $\tilde{\psi}$ diverging to $+\infty$ such that $\int_{\Omega'} \tilde{\psi}(\tilde{u}_k) \dx < C < +\infty$), we may apply \cite[Theorem~11.3]{DM13}
 (or we may use the compactness theorem for $GSBD$ \cite[Theorem~1.1]{CC18}, since the exceptional set $A$ therein is empty by \eqref{2306181703})  to get
\begin{equation}\label{2406181254}
\begin{split}
\tilde{u}\in GSBD^p(\Omega')\,, \qquad  e(\widehat{u}_k) \weak e(\tilde{u}) \text{  in }L^p(\Omega'; \Mnn)\,. 
\end{split}
\end{equation}
Together with \eqref{2306181618} this proves the claim \eqref{2306181226}.
At this stage Theorem~\ref{teo:GSBDdiventaSBD} implies that
\begin{equation*}
\tilde{u} \in SBD^p(\Omega')\,.
\end{equation*}
By the weak convergences \eqref{2406181254}, the fact that $\tilde{v}_k$ converge to 1 uniformly up to a set of vanishing measure, 
%
and the Ioffe-Olech semicontinuity theorem, see e.g.\ \cite[Theorem~2.3.1]{ButLibro}, we get that for any $\lambda\in (0,1)$ (cf.\ \cite[(4.4) in proof of Theorem~3.3]{FocIur14} and \cite[(5.4a) in proof of Theorem~5.1]{CC17})
\begin{equation}\label{2306181808}
\begin{split}
&\int \limits_{\Omega'} \Big[ f_p\big(A(e(\tilde{u}))\big) + f_p\big(e(\tilde{u})-A(e(\tilde{u}))\big) \Big] \dx \\& \hspace{1em}\leq \liminf_{k\to \infty} \int \limits_{ \{ \tilde{v}_k >\lambda\} } \Big[ (\tilde{v}_k+ \varepsilon_k^{p-1}) f_p(\A \tilde{u}_k) + (\tilde{v}_k + \eta_{\varepsilon_k}) \, f_p(e(\tilde{u}_k)- \A \tilde{u}_k) \Big] \dx\,.
\end{split}
\end{equation}
As for the Ambrosio-Tortorelli term $\int \limits_\Omega \big[ \frac{\psi(\tilde{v}_k)}{\varepsilon_k} + \gamma \varepsilon_k^{q-1}|\nabla \tilde{v}_k|^q \big] \dx$ in $\tilde{D}_k$, by a standard argument 
(see e.g.\ \cite[(4.18)]{FocIur14}, now we argue in the enlarged domain $\Omega'$)  we obtain 
\begin{equation*}\label{2406181316}
\liminf_{k\to \infty} \hn(\partial^* \{ \phi(\tilde{v}_k) > s\} ) \geq 2 \hn(J_{\tilde{u}})= 2 \hn \big(J_u \cup (\dod \cap \{ \mathrm{tr}(u-u_0) \neq 0 \} )\big)
\end{equation*}
for every $s \in (\phi(\lambda), \phi(1))$. Together with \eqref{2406181703} this gives
\begin{equation}\label{2406181705}
\begin{split}
& 2 (q')^{1/q'} (\gamma q)^{1/q} \big(\phi(1)-\phi(\lambda)\big) \hn \big(J_u \cup (\dod \cap \{ \mathrm{tr}(u-u_0) \neq 0 \} )\big) \\& \hspace{1em}\leq \liminf_{k\to \infty}  \int \limits_{ \{\tilde{v}_k > \lambda\} } \hspace{-0.7em}\Big[ \frac{\psi(\tilde{v}_k)}{\varepsilon_k} + \gamma \varepsilon_k^{q-1}|\nabla \tilde{v}_k|^q \Big] \dx \,.
\end{split}
\end{equation}

Let us now estimate the other significant term in the limit by
\begin{equation}\label{2506180931}
\int \limits_{ \{\tilde{v}_k \leq \lambda\} } \Big[ (\tilde{v}_k+ \varepsilon_k^{p-1}) f_p(\A \tilde{u}_k) + \frac{\psi(\tilde{v}_k)}{\varepsilon_k} \Big] \dx \geq p^{\frac{1}{p}} (p')^{\frac{1}{p'}} \psi(\lambda)^{\frac{1}{p'}} \int \limits_{ \{\tilde{v}_k \leq \lambda\} }  (f_p)^{\frac{1}{p}}(\A \tilde{u}_k)  \dx\,,
\end{equation}
thanks to the Young inequality.
We claim that for any $\lambda \in (0, 1)$
\begin{equation}\label{2406181747}
\int \limits_{J_{\tilde{u}}}  (\tilde{f}_p)^{\frac{1}{p}}\big( [\tilde{u}] \oa \nu_u \big) \dh \leq \liminf_{k\to \infty} \int \limits_{ \{\tilde{v}_k \leq \lambda\} }  (f_p)^{\frac{1}{p}}(\A\tilde{u}_k) \dx \,.
\end{equation}
Up to a subsequence, that we do not relabel, we may assume that the $\liminf$ above is a limit, so it is enough to 
prove \eqref{2406181747} 
 along any further   subsequence.
Let us introduce the positive measures defined on any $B\subset \Omega'$ Borel set by
\begin{equation*}
\mu_k(B):= \hspace{-1.3em} \int \limits_{B\cap \{\tilde{v}_k \leq \lambda\} } \hspace{-1.3em}(f_p)^{\frac{1}{p}}(\A \tilde{u}_k) \dx\,,\qquad \widehat{\mu}_k(B):= \int \limits_{B} (\tilde{f}_p)^{\frac{1}{p}}(\A \tilde{u}_k) \dx\,.
\end{equation*}
By \eqref{2306181618} we get that $\mu_k$ and $\widehat{\mu}_k$ are equibounded, and then (up to a subsequence)
\begin{equation*}
\mu_k \wstar \mu\,,\qquad \widehat{\mu}_k \wstar \widehat{\mu} \quad \text{ in } \mathcal{M}_b^+(\Omega')\,.
\end{equation*}
Therefore we want to prove that the Radon-Nikodym derivatives of $\mu$ and $\widehat{\mu}$ satisfy
\begin{equation}\label{2406181758}
\frac{\dd \mu}{\dh \mres J_{\tilde{u}}} = \frac{\dd \widehat{\mu}}{\dh \mres J_{\tilde{u}}} \geq (\tilde{f}_p)^{\frac{1}{p}}\big( [\tilde{u}] \oa \nu_{\tilde{u}} \big) \quad \text{$\hn$-a.e.\ in } J_{\tilde{u}}\,.
\end{equation}
With \eqref{2406181758} at disposal, we conclude \eqref{2406181747} since then 
\[
 \int \limits_{J_{\tilde{u} \cap B}} (\tilde{f}_p)^{\frac{1}{p}}\big( [\tilde{u}] \oa \nu_{\tilde{u}} \big) \dh \leq \mu(B) 
\] 
as (positive) measures on $\Omega'$, and 
\[
\mu(\Omega') \leq \liminf_{k\to \infty} \mu_k(\Omega')= \liminf_{k\to \infty} \int \limits_{ \{\tilde{v}_k \leq \lambda\} }  (f_p)^{\frac{1}{p}}(\A\tilde{u}_k) \dx \,.
\]
In order to show \eqref{2406181758},  we argue in the spirit of \cite[Proof of~(4.6)]{FocIur14} (the functions giving the density of elastic energy are there supposed to be quadratic in $e(u)$, we include the case where these have $p$-growth and are not $p$-homogeneous, cf. \eqref{1806181046}).  
Let us define the measures $\zeta_k \in \mathcal{M}_b^+(\Omega')$ by 
\begin{equation}\label{2506180012}
\zeta_k(B):= D_k^B(\tilde{u}_k, \tilde{v}_k)\,,\quad\text{for }B\subset \Omega' \text{ Borel,} 
\end{equation}
where $D_k^B$ denotes the localisation of $D_k$ to the set $B$. By \eqref{2306181225}, $\zeta_k$ are equibounded in $\mathcal{M}_b^+(\Omega')$, so, up to a subsequence, $\zeta_k \wstar \zeta \in \mathcal{M}_b^+(\Omega')$.
Recalling Corollary~\ref{cor:2306181058}  and since 
 $(\tilde{f}_p)^{\frac{1}{p}}$ is a norm,  
we have that 
\begin{equation}\label{2506180052}
\frac{\dd\, (\tilde{f}_p)^{\frac{1}{p}}(\A \tilde{u})}{\dh \mres J_{\tilde{u}}} = (\tilde{f}_p)^{\frac{1}{p}}([\tilde{u}] \oa \nu_{\tilde{u}}) \quad\hn\text{-a.e.\ in } J_{\tilde{u}}\,.
\end{equation}
Let us fix $x \in J_{\tilde{u}}$ such that the derivatives in \eqref{2406181758} plus $\frac{\dd \zeta}{\dh \mres J_{\tilde{u}}}$ exist finite in $x$, and \eqref{2506180052} is verified in $x$ 
(this holds for $\hn$-a.e.\ $x\in J_{\tilde{u}}$); let
\begin{equation*}
I:= \{ \varrho \in (0, \mathrm{d}(x, \partial \Omega') ) \colon \mu(\partial B_\varrho(x))= \widehat{\mu}(\partial B_\varrho(x))= \zeta(\partial B_\varrho(x))=0   \}\,.
\end{equation*}
For every $\varrho \in (0, \mathrm{d}(x, \partial \Omega') )$
consider the three sets (that partition $B_\varrho(x)$)
 \begin{equation*}
\begin{split} 
 & E_1:=B_\varrho(x) \cap \{ |\A\tilde{u}_k| \leq \varrho^{-\frac{1}{2}}\} \cap \{\tilde{v}_k \leq \lambda\}\,, 
 \\&E_2:=B_\varrho(x) \cap \{ |\A\tilde{u}_k| > \varrho^{-\frac{1}{2}} \} \cap \{\tilde{v}_k \leq \lambda\}\,,\\& 
 E_3:=B_\varrho(x) \cap \{\tilde{v}_k > \lambda\}\,.
 \end{split}
 \end{equation*} 
Since, by \eqref{2607182230}, there exists $C'_{f_p}\geq C_{f_p}>0$ such that $f_p(\xi)\leq C'_{f_p} (1+ |\xi|^p)$ 
and $\tilde{f}_p(\xi)\leq C'_{f_p} |\xi|^p$, 
it holds that
\begin{equation}\label{0912180024}
\int \limits_{E_1} (f_p)^{\frac{1}{p}}(\A\tilde{u}_k) \dx \leq C'_{f_p} \, (\omega_{n-1} \varrho^n + \varrho^{n-\frac{1}{2}})\,, \quad \int \limits_{E_1} (\tilde{f}_p)^{\frac{1}{p}}(\A\tilde{u}_k) \dx \leq C'_{f_p} \, \varrho^{n-\frac{1}{2}}\,.
\end{equation}
By \eqref{1806181046} 
We have that
\begin{equation}\label{0912180022}
\int \limits_{E_2} \big| (f_p)^{\frac{1}{p}}(\A\tilde{u}_k) - (\tilde{f}_p)^{\frac{1}{p}}(\A\tilde{u}_k) \big| \dx  \leq \delta_{\varrho} \hspace{-1em} \int \limits_{B_\varrho(x)\cap \{\tilde{v}_k \leq \lambda\} } \hspace{-1em} | \A \tilde{u}_k| \dx \leq C(C_{f_p})  \,\delta_{\varrho} \, \mu_k(B_\varrho(x))  \,,
\end{equation}
for 
\[
\delta_\varrho:=\hspace{-1em}\sup_{s> \varrho^{-1/2},\, |\xi|=1} \hspace{-0.2em}\Bigg|\frac{(f_p)^{\frac{1}{p}}\big(s \xi\big)}{|s|}- (\tilde{f}_p)^{\frac{1}{p}}\big(\xi\big)  \Bigg|\,,
\]
using \eqref{2607182230} and the fact that
 \[
\sup_{s> \varrho^{-1/2}} \Bigg|\frac{(f_p)^{\frac{1}{p}}\big(s \frac{\A \tilde{u}_k}{|\A \tilde{u}_k|}\big)}{|s|}- (\tilde{f}_p)^{\frac{1}{p}}\big(\frac{\A \tilde{u}_k}{|\A \tilde{u}_k|}\big)  \Bigg| \leq \delta_\varrho\,. \] 
By \eqref{1806181046}, $\lim_{\varrho\to 0} \delta_\varrho=0$ (uniformly in $k$). 
Thus, the estimate \eqref{0912180022} and the fact that $\lim_\varrho \lim_k \varrho^{-(n-1)} \mu_k(B_\varrho(x)) < C$ (by the choice of $\varrho$ and $x$, in particular $\frac{\dd \mu}{\dh \mres J_{\tilde{u}}}$ exists finite at $x$) give that
\begin{equation}\label{0912180023}
\lim_{\substack{\varrho \in I \\ \varrho\to 0}} \lim_{k\to \infty} \varrho^{-(n-1)}  \int \limits_{E_2} \big| (f_p)^{\frac{1}{p}}(\A\tilde{u}_k) - (\tilde{f}_p)^{\frac{1}{p}}(\A\tilde{u}_k) \big| \dx =0\,.
\end{equation}
On the other hand H\"older's inequality gives
 \begin{equation}\label{2506180034}
 \begin{split}
 \int \limits_{E_3} (f_p)^{\frac{1}{p}}(\A\tilde{u}_k) \dx &\leq \Big(\int \limits_{E_3} f_p(\A\tilde{u}_k) \dx\Big)^{\frac{1}{p}} \big( \mathcal{L}^n(E_3) \big)^{\frac{1}{p'}} \\& 
 \leq \lambda^{-\frac{1}{p}}\big(\zeta_k(B_\varrho(x))\big)^{\frac{1}{p}} \big( \mathcal{L}^n(B_\varrho(x)) \big)^{\frac{1}{p'}}\,,
 \end{split}
 \end{equation}
 since 
 \[
 \zeta_k(B_\varrho(x))=D_k^{B_{\varrho}(x)}(\tilde{u}_k, \tilde{v}_k)\geq \int \limits_{E_3} (\tilde{v}_k + \varepsilon_k^{p-1}) f_p(\A\tilde{u}_k) \dx \geq \lambda \int \limits_{E_3} f_p(\A\tilde{u}_k)  \dx\,.
 \]
 Therefore we obtain
 \begin{equation}\label{2506180109}
 \begin{split}
\frac{\dd \mu}{\dh \mres J_{\tilde{u}}} = \lim_{\substack{\varrho \in I \\ \varrho\to 0}} \lim_{k\to \infty} \frac{\mu_k(B_\varrho(x))}{\omega_{n-1} \varrho^{n-1}}= \lim_{\substack{\varrho \in I \\ \varrho\to 0}} \lim_{k\to \infty} \frac{\widehat{\mu}_k(B_\varrho(x))}{\omega_{n-1} \varrho^{n-1}}= \frac{\dd \widehat{\mu}}{\dh \mres J_{\tilde{u}}}\,.
 \end{split}
 \end{equation}
 Indeed, the first and the last equalities follow by definition of Radon-Nikodym derivative and the choice of $I$, while the central equality descends by putting together \eqref{0912180024}, \eqref{2506180034} (divided by $\omega_{n-1} \varrho^{n-1}$), and \eqref{0912180023}. 
In order to deal with \eqref{2506180034}, we remark that $\lim_\varrho \lim_k \varrho^{-(n-1)} \zeta_k(B_\varrho(x)) < C$ since $\frac{\dd \zeta}{\dh \mres J_{\tilde{u}}}$ exists finite at $x$.

Since $\widehat{\mu}_k$ is defined in terms of the convex positively 1-homogeneous $(\tilde{f}_p)^{\frac{1}{p}}$ and  $\A\tilde{u}_k \wstar \A \tilde{u}$ in $\mathcal{M}_b(\Omega';\Mnn)$  by \eqref{2306181618}, Reshetnyak Semicontinuity Theorem (see e.g.\ \cite[Theorem~2.38]{AFP}) implies that 
\begin{equation*}
(\tilde{f}_p)^{\frac{1}{p}}(\A \tilde{u}) (B_\varrho(x)) \leq \liminf_{k\to \infty} (\tilde{f}_p)^{\frac{1}{p}}(\A\tilde{u}_k) (B_\varrho(x)) = \liminf_{k\to \infty} \widehat{\mu}_k(B_\varrho(x)) = \widehat{\mu}(B_\varrho(x)) \,,
\end{equation*}
if $\varrho \in I$. Taking the Radon-Nikodym derivative of the above inequality with respect to $\hn \mres J_{\tilde{u}}$ at $x$, for $I \ni \varrho \to 0$,   and recalling \eqref{2506180109} and the choice of $x$ (that gives in particular \eqref{2506180052} at $x$), we deduce  \eqref{2406181758} and then prove the claim \eqref{2406181747}.

We now collect \eqref{2306181808}, \eqref{2406181705}, \eqref{2506180931}, \eqref{2406181747} and use the arbitrariness of $\lambda \in (0,1)$ (indeed we let $\lambda\to 0$) to conclude the $\Gamma$-$\liminf$ inequality
\begin{equation*}
\tilde{D}(\tilde{u}, \tilde{v}) \leq \liminf_{k\to \infty} \tilde{D}_k(\tilde{u}_k, \tilde{v}_k)\,,
\end{equation*}
that gives the desired inequality $D(u,v) \leq \liminf_{k\to \infty} D_k(u_k, v_k)$, by \eqref{2506180936}.

Moreover, notice that \eqref{2607181709} gives also the inclusion stated in Theorem~\ref{teo:main} for the sublevels of $D_\varepsilon$. 
 The corresponding compactness property 
follows arguing as done for proving \eqref{2306181618}, but now the boundedness of the polynomials $\pi_{\Omega'} \tilde{u}_k$ is a consequence of the fact that $\tilde{u}_k = u_0$ in $\Omega' \sm \ol \Omega$ (we argue separately on each connected component, using Remark~\ref{1912182103}). The convergence of quasi-minimisers for $D_\varepsilon$ to a minimiser for $D$ follows by general properties of $\Gamma$-convergence (see e.g.\ \cite[Corollary~7.17]{DMLibro}).

\begin{remark}\label{rem:0208181957}
If $n=2$ and $\A=\E_D$, by \eqref{2607181709} and \eqref{1906182255} we get still \eqref{2306181618}, as well as \eqref{2406181254}, arguing as done for $n \geq 3$. If we had at disposal the analogous of Theorem~\ref{teo:GSBDdiventaSBD} (and then Corollary~\ref{cor:2306181058}) we could follow the proof of $\Gamma$-$\liminf$ inequality as above.  
\end{remark}

\begin{remark}\label{rem:0208182013}
We could reproduce the proof of the $\Gamma$-$\liminf$ inequality above for $n\geq 3$ and the operator
\begin{equation*}
\mathbb{B} u := \E_D u + \frac{\mathrm{div}^+ u}{n}\mathrm{Id}\,.
\end{equation*}
Indeed $BV^{\mathbb{B}}(\Omega') \subset BV^{\E_D}(\Omega')$, so that $GSBD(\Omega') \cap BV^{\mathbb{B}}(\Omega')=SBD(\Omega')$,  and then, applying $\mathbb{B}$ to \eqref{1205181701},
\begin{equation}\label{0408180835}
\mathbb{B} u \mres J_u = \Big[ \big( [u] \odot \nu_u \big)_D +   \frac{([u] \cdot \nu_u)^+}{n}\mathrm{Id} \Big] \hn \mres J_u =: [u] {\otimes_{\mathbb{B}}} \nu_u\,.
\end{equation}
Moreover, it holds $\mathbb{B} \tilde{u}_k \wstar \mathbb{B} \tilde{u}$ in $\mathcal{M}_b(\Omega'; \Mnn)$.
 Thus we get \eqref{2506180052} for $[u] {\otimes_{\mathbb{B}}} \nu_u$, and so the corresponding version of \eqref{2406181747}. 
\end{remark}
\section{$\Gamma$-$\limsup$ inequality}
As in \cite{FocIur14}, we construct by hand a recovering sequence starting from a function $u$ with regular jump set and smooth outside $J_u$. However, since our result is formulated for general $SBD$ functions without requiring \emph{a priori} integrability for $u$ it is not enough now to apply neither the density results for $GSBD$ \cite[Theorem~3.1]{Iur14} and 
\cite{FriPWKorn, CFI17Density, CC17}, nor the approximations  \cite{Cha04, Cha05Add} for $SBD$. Indeed all these results do not approximate the jump part of $u$ without assuming $u \in L^\infty(\Omega;\Rn)$: this request  is not natural because the functionals, that depend on $e(u)$, are not decreasing by truncation of $u$. 

The analysis is then based on the following approximation for $SBD^p$ functions in $BD$-norm, recently proven in \cite[Theorem~1.1]{Cri19}.
\begin{theorem}\label{teo:density}
Let $\Omega$ be an open bounded Lipschitz subset of $\Rn$, and $u\in SBD^p(\Omega)$, with $p>1$.
Then there exist $u_k\in SBV^p(\Omega;\Rn)\cap L^\infty(\Omega; \Rn)$ such that each
$J_{u_k}$ is closed 
and included in a finite union of closed connected pieces of $C^1$ hypersurfaces, $u_k\in C^\infty(\ol \Omega\sm J_{u_k};\Rn) \cap W^{m,\infty}(\Omega\setminus J_{u_k}; \Rn)$ for every $m\in \N$, and:
\begin{equation}\label{1main}
\lim_{k\to \infty}\Big(\|u_k-u\|_{BD(\Omega)}+\|e(u_k) - e(u)\|_{L^p(\Omega;\Mnn)} + \hn(J_{u_k}\triangle J_u)\Big) = 0\,.
\end{equation}
\end{theorem}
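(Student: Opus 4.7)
The plan is to adapt the De Giorgi--Ambrosio and Cortesani--Toader density schemes to the $SBD$ setting, pushing the arguments of \cite{Cha04, Cha05Add, FocIur14, CFI17Density, CC17} in order to remove the $L^p$ or $L^\infty$ a priori integrability of $u$ that all of those works assume. The key observation making this feasible is that the requested convergence is the $BD$-norm for $u$ together with $L^p$-convergence of the symmetric gradient: since the $u$-part is only measured in total variation (equivalently, $L^1$), one can tolerate local errors of the total-variation type provided they are summable and vanish with the localisation scale.

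First I would reduce to the case of essentially polyhedral jump. Using countable $\hn$-rectifiability of $J_u$, I would cover $J_u$, up to a set of vanishing $\hn$-measure, by finitely many thin cylinders centred on closed connected pieces $\Sigma_i$ of $C^1$ hypersurfaces, straighten each $\Sigma_i$ locally via a $C^1$-diffeomorphism so that it becomes a portion of $\{x_n=0\}$, and glue via a partition of unity. In each straightened cylinder I would replace $u$ by a function whose jump lies exactly on $\Sigma_i$, exploiting $SBD$-trace theory of Anzellotti--Giaquinta and Temam: the traces $u^\pm$ on the two sides of $\Sigma_i$ exist in $L^1_{\mathrm{loc}}$, can be mollified tangentially, and then extended in the normal variable by a smooth profile to produce a function smooth off $\Sigma_i$ with prescribed mollified traces. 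Away from a tubular neighbourhood $N_\delta$ of $\bigcup_i \Sigma_i$, $u$ is in $W^{1,p}_{\mathrm{loc}}$ and a standard mollification delivers the smooth bulk approximation with the requested $L^p$-convergence of $e(u_k)$. The $W^{m,\infty}$ and $L^\infty$ regularity is then obtained by a final truncation on exceedance sets: since $u_k$ is already smooth off $J_{u_k}$, the measure of $\{|u_k| > M\}$ can be made arbitrarily small as $M \to \infty$, which is harmless for $L^1$-convergence of $u_k$ and for $L^p$-convergence of $e(u_k)$.

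The main obstacle is controlling the $BD$-norm of the correction inside $N_\delta$ without an a priori integrability bound on $u$. My strategy would be a dyadic partition-of-unity argument: on each cube $Q$ of side $\delta$ intersecting some $\Sigma_i$, I would subtract a cube-dependent infinitesimal rigid motion $r_Q$ provided by a Korn--Poincar\'e-type inequality in $SBD^p$, in the spirit of the compactness estimates of \cite{CC17, CC18}, so that $\|u - r_Q\|_{L^1(Q;\Rn)}$ is controlled by $\|e(u)\|_{L^p(Q)}$ plus a contribution scaling with $\hn(J_u \cap Q)$. On the subtracted function the previous construction reduces to the bounded regime, and the compatibility of the $r_Q$'s on overlapping cubes (again via the rigidity of infinitesimal rigid motions, cf.\ Remark~\ref{1912182103}) allows the local pieces to be reassembled into a globally defined $u_k$ whose $BD$-distance from $u$, $L^p$-distance of symmetric gradient, and $\hn(J_{u_k} \triangle J_u)$ vanish as $\delta \to 0$. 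The delicate point is to ensure that the spurious jumps introduced on $\partial Q$ when gluing contribute an $\hn$-error that sums to zero with $\delta$; this follows because each such contribution is bounded by the $L^1$-oscillation of $u - r_Q$ on a cube of vanishing size, a quantity which is precisely what the Korn--Poincar\'e step controls.
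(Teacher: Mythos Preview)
The paper does not prove this theorem: it is quoted from \cite[Theorem~1.1]{Cri19} and used as an input. The only indications in the present paper about the mechanism of that proof appear later, where the construction of \cite{Cri19} is adapted to preserve the Dirichlet datum; there one sees that the central device is a Nitsche-type reflection/extension (\cite[Lemma~2.1]{Cri19}, cf.\ \cite{Nit81}) across flat interfaces, applied on a fine grid of parallelepipeds aligned with the jump after local $C^1$-straightening. The spurious jump created on the lateral faces of the parallelepipeds is confined to a region of height $O(k^{-1})$, and the $BD$-trace theory is invoked directly to estimate the corresponding surface integrals of $|[u]|$.

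Your sketch identifies the right obstacle (no a priori integrability of $u$) but the proposed cure has a genuine gap in the ``dyadic Korn--Poincar\'e'' step. On a cube $Q$ of side $\delta$ that a piece $\Sigma_i$ of $J_u$ crosses, one has $\hn(J_u\cap Q)\sim \delta^{n-1}$, comparable to the perimeter of $Q$; in this regime the $SBD^p$ Korn--Poincar\'e inequalities of \cite{CCF16, CC17, CC18} yield nothing useful (the exceptional set fills a positive fraction of $Q$, or, equivalently, one is forced to use \emph{two} rigid motions, one on each side of $\Sigma_i$, differing by roughly $[u]$). Hence you cannot ``reduce to the bounded regime'' on such cubes by subtracting a single $r_Q$. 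A second, related problem is the gluing: if you reassemble the local pieces through a partition of unity $(\varphi_Q)$, the cross term $\nabla\varphi_Q\odot(u-r_Q)$ enters $e(u_k)$ and is controlled by your estimate only in $L^1$, not in $L^p$, so the claimed $\|e(u_k)-e(u)\|_{L^p}\to 0$ is not established. The construction in \cite{Cri19} sidesteps both issues precisely by using the reflection/extension trick, which transports $L^p$-bounds on $e(u)$ with a universal constant and requires no integrability of $u$, and by accepting genuine new jump on thin lateral strips (whose amplitude is then estimated via traces) rather than attempting a smooth gluing.
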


We combine the previous approximation with a well-known result by Cortesani and Toader, that 
allows us to work with the so-called ``piecewise smooth''  $SBV$-functions, denoted $\mathcal{W}(\Omega;\Rn)$, namely
\begin{equation*}
u \in \mathcal{W}(\Omega;\Rn) \text{ if }\begin{cases}
u\in SBV(\Omega;\Rn)\cap W^{m,\infty}(\Omega\sm J_u;\Rn) \,\text{for every }m\in \N\,,\\
\hn(\ol{J_u} \sm J_u ) = 0\,,\\
\ol{J_u} \text{ is the intersection of $\Omega$ with a finite union of ${(n{-}1)}$-dimensional simplexes}\,.
\end{cases}
\end{equation*}
We report below the result by Cortesani and Toader, in a slightly less general version.
\begin{theorem}[\cite{CorToa99}, Theorem~3.1] \label{teo:CorToa}
Let $\Omega$ be an open bounded Lipschitz set.
For every $u\in SBV^p(\Omega;\Rn) \cap L^\infty(\Omega;\Rn)$ there exist $u_k\in \mathcal{W}(\Omega;\Rn)$ such that
\begin{align*}
\lim_{k\to \infty} \Big( \|u_k-u\|_{L^1(\Omega;\Rn)} &+ \|\nabla u_k -\nabla u\|_{L^p(\Omega;\mathbb{M}^{n\times n})} + \hn(J_{u_k}\triangle J_u)  \Big)=0\,,\\
\lim_{k \to \infty} \int \limits_{J_{u_k}\cap A} \phi(x, u_k^+, & u_k^-, \nu_{u_k})   \dh = \int \limits_{J_u \cap A} \phi(x, u^+, u^-, \nu_{u}) \dh\,,
\end{align*}
for every $A\subset \Omega$, $\hn(\partial A \cap J_u)=0$, and every $\phi$ strictly positive, continuous, and $BV$-elliptic (see e.g.\ \cite{Amb90GSBV} or \cite[equation (2.4)]{CorToa99} for the notion of $BV$-ellipticity).
\end{theorem}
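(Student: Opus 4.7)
The plan is to approximate $u \in SBV^p(\Omega;\Rn) \cap L^\infty$ in three conceptual steps: polyhedral simplification of the jump set, bulk smoothing on each side of the simplified jump, and matching of the surface integrals via $BV$-ellipticity of $\phi$. The $L^\infty$ assumption on $u$ is crucial in the smoothing step, since it guarantees that standard mollification preserves uniform bounds and produces the required $W^{m,\infty}$ regularity away from the jump.

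First, since $J_u$ is countably $(\hn, n{-}1)$-rectifiable with $\hn(J_u) < \infty$, I would cover $\hn$-almost all of $J_u$ by finitely many pairwise disjoint closed pieces $K_i$ of $C^1$ hypersurfaces on which the one-sided traces $u^\pm$ admit $C^1$ extensions into thin tubular neighborhoods $T_i$ on either side. Each $K_i$ is then approximated from inside $T_i$ by a piecewise affine (simplicial) hypersurface $\widetilde K_i$; since $T_i$ can be taken of arbitrarily small $\hn$-section, this costs a small amount of surface energy in terms of the area of the symmetric difference. Inside $T_i$, the function $u$ is redefined by extending the two smoothed traces across $\widetilde K_i$, yielding an intermediate $\widetilde u$ whose jump set is exactly the polyhedral set $\bigcup_i \widetilde K_i$ and which is Sobolev on the complement.

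Second, on $\Omega \sm \bigcup_i \widetilde K_i$ the function $\widetilde u$ has no jump and lies locally in $W^{1,p} \cap L^\infty$ on each connected component. A standard mollification with a smooth partition of unity subordinate to a cover disjoint from the simplicial jump, combined with reflection across the faces of $\widetilde K_i$ to preserve the one-sided behaviour, yields $u_k \in \mathcal{W}(\Omega;\Rn)$ with $u_k \to u$ in $L^1(\Omega;\Rn)$, $\nabla u_k \to \nabla u$ in $L^p(\Omega;\mathbb{M}^{n\times n})$, and $\hn(J_{u_k} \triangle J_u) \to 0$, as well as $u_k \in W^{m,\infty}(\Omega \sm J_{u_k};\Rn)$ for every $m\in \N$.

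Third, on the polyhedral jump $J_{u_k}$ the triplets $(x,u_k^+, u_k^-, \nu_{u_k})$ converge pointwise to $(x,u^+, u^-, \nu_u)$ along the portions inherited from the $K_i$, so continuity and strict positivity of $\phi$ yield a $\limsup$ estimate for $\int_{J_{u_k}\cap A}\phi \dh$ by $\int_{J_u\cap A}\phi \dh$ plus an error controlled by the $\hn$-measure of $J_u$ not covered by the $K_i$. The matching $\liminf$ inequality is precisely the content of $BV$-ellipticity of $\phi$, which by its Ambrosio--Braides characterisation is equivalent to lower semicontinuity of $u \mapsto \int_{J_u}\phi(x,u^+,u^-,\nu_u)\dh$ along $L^1$-convergent $SBV$ sequences. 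A diagonal argument on the covering refinement then produces the desired equality. The main technical obstacle I expect is arranging the polyhedral replacement so that the normals $\nu_{\widetilde K_i}$ approximate $\nu_{K_i}$ uniformly while the tubes $T_i$ shrink, since continuity of $\phi$ in its last argument is used through a uniform modulus on compact sets: the tube thickness must be chosen small compared to the angular defect of the simplicial approximation, yet large enough for the mollification step and the reflection extension to operate without spilling outside $\Omega$.
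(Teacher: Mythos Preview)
The paper does not prove this theorem: it is quoted verbatim as Theorem~3.1 of Cortesani--Toader \cite{CorToa99} and used as a black box, so there is no ``paper's own proof'' to compare your sketch against. Your outline is a reasonable summary of the strategy in \cite{CorToa99}, though one point is overstated: in step~1 you assume the one-sided traces $u^\pm$ admit $C^1$ extensions into tubular neighbourhoods of the $K_i$, but for $u\in SBV^p\cap L^\infty$ the traces are merely $L^\infty$ on $J_u$, not $C^1$. The actual Cortesani--Toader argument first reduces to functions whose jump set is already polyhedral and whose traces are piecewise constant (via a Braides--Chiad\`o~Piat type approximation), and only then performs the bulk smoothing and the $BV$-ellipticity matching you describe; the ``$C^1$ extension of traces'' shortcut you propose would not work as stated.
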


\begin{remark}\label{rem:2506181133}
In Theorem~\ref{teo:CorToa} we may assume also $J_{u_k} \Subset \Omega$, by \cite[Remark~6.3]{Cri19}, in turn using \cite{DPFusPra17}. At this stage, \cite[Lemma~5.2]{DPFusPra17} gives that for any $p>1$ the $n{-}1$ dimensional simplexes in the decomposition of $\ol J_u$ may be taken \emph{pairwise disjoint} and such that also $J_u \cap \Pi_j \cap \Pi_i=\emptyset$ for any two different hyperplanes $\Pi_i$, $\Pi_j$ (if $p \in (1,2]$ it is enough to employ the capacitary argument in \cite[Remark~3.5]{CorToa99}).
Moreover, we notice that our function $(\tilde{f}_p)^{\frac{1}{p}}$ is $BV$-elliptic.
\end{remark}

The combination of the density results described so far guarantees that for a given $u \in SBD^p(\Omega)$ we can find approximating functions $u_k \in \mathcal{W}(\Omega,\Rn)$ with $J_{u_k}\Subset \Omega$ and $J_{u_k} \cap \Pi_j \cap \Pi_i=\emptyset$ for any two different hyperplanes $\Pi_i$, $\Pi_j$. The last property we have to ensure is that 
\begin{equation}\label{2212181528}
\mathrm{tr}_{\dom}\, u_k = \mathrm{tr}_{\dom}\, u_0 \quad \text{ on }\dod\,.
\end{equation} 
This is possible in view of the assumption \eqref{1806181920}, 
 arguing as in \cite[Theorem~5.5]{CC17} with tools from \cite{Cri19}, as sketched below. 
 
 For given $u\in SBD^p(\Omega)$ and $\varepsilon>0$,  one first defines a suitable extension $\widehat{u}_k$ of $u$ on $\Omega_t:=\Omega+B(0,t)$, for $t< 32 k^{-1}$, 
as follows.  We can find pairwise disjoint cubes $(Q_h)_{h=1}^{\ol h}$, centered at $x_h \in \don$ with sidelength $\varrho_h$,  $\mathrm{d}(Q_h, \dod)> d_\varepsilon>0$ (recall \eqref{3101190848}), 
 \begin{equation}\label{2012182236}
 \int\limits_{\don \sm \widehat{Q}} \hspace{-0.5em}1 +|\tr_{\dom}(u-u_0)| \dh < \varepsilon, \qquad D^{\Omega \cap \widehat{Q}}(u,1) < \eta_\varepsilon\,,\qquad\text{for }\widehat{Q}:=\bigcup_h \ol Q_h\,,
 \end{equation}
 for suitable $d_\varepsilon$, $\eta_\varepsilon$ with $\lim_{\varepsilon\to 0} d_\varepsilon=\lim_{\varepsilon\to 0} \eta_\varepsilon=0$  ($D^A$ denotes the energy $D$ in Theorem~\ref{teo:main} localised on a set $A$),
 $J_u \cap  \partial Q_h = \emptyset$ for each $h$, 
 \begin{equation}\label{1701191821}
 u \in L^1\Big(\Omega \cap \bigcup_h \partial Q_h;\Rn\Big)\,,\qquad u_0 \in L^1\Big(\bigcup_h \partial Q_h;\Rn\Big)\,. 
\end{equation} 
 Moreover, $\don \cap Q_h$ is ``almost'' a diameter of $Q_h$ with respect to $\nu_\dom(x_h)$, namely (cf. \cite[(4.2)]{Cri19}) there exists $C^1$ hypersurfaces $(\Gamma_h)_{h=1}^{\ol h}$ with $x_h \in \Gamma_h \subset Q_h$ and
\begin{equation}\label{2212181446}
\begin{split}
 \hn\big((\don\triangle \Gamma_h)\,\cap &\, \ol{Q}_h\big)< \varepsilon (2\varrho_h)^{n-1}< \, \frac{\varepsilon}{1-\varepsilon}  \hn(\don\cap \ol{Q}_h)\,, \\
 \Gamma_h\, \text{is a $C^1$ graph with respect} & \text{ to }  \nu_{\dom}(x_h) \text{ with Lipschitz constant less than $\varepsilon/2$. }
 \end{split}
 \end{equation}
  Let
\[
\widetilde{u}:=u \chi_\Omega + u_0 \chi_{\Omega_t \sm \Omega}\,,
\]
and notice that by \eqref{2212181446} we can say that  (up to modify $\eta_\varepsilon$) 
\begin{equation}\label{3001191103}
\int\limits_{\don \sm \Gamma_h} |[u-u_0]| \dh = \int\limits_{\don \sm \Gamma_h} |[\widetilde{u}]| \dh < \eta_\varepsilon\,.
\end{equation} 
We now approximate $\widetilde{u}$ with respect to the energy $D$, arguing in each $Q_h$, by a sequence of functions, depending on a parameter $k$.
 We notice that the choice of the finite family of cubes $Q_h$ is done before the construction of these approximations, and depends only on $\varepsilon$.
Then we can argue, as follows, for a fixed cube, denoting $Q\equiv Q_h$, $\Gamma \equiv \Gamma_h$ and assuming, up to a rotation and a translation, $x_h=0$ and $\nu_{\dom}(x_h)=e_n$ (notice that all the notation indeed depends on $h$). Let $Q^-$ denote the almost half cube contained in $Q$ which is below $\Gamma$ (that is $Q^-$ is almost contained in $\Omega$).

 We partition $Q^-$ in parallelepipeds with first $n{-}1$ coordinates in squares of sidelength $(\eta_\varepsilon\, k)^{-1}$ 
 \begin{equation*}
F_{\textbf{m}}:=\big\{ (y_1,\dots, y_{n-1})\in \R^{n-1}\colon y_i \in (\eta_\varepsilon k)^{-1} m_i + \big(0, (\eta_\varepsilon k)^{-1})\big\}
\end{equation*} 
(we have $\textbf{m}=(m_1,\dots,m_{n-1})\in \{-\eta_\varepsilon k \varrho, -\eta_\varepsilon k \varrho+1, \dots, 0, \dots, \eta_\varepsilon k \varrho-1\}^{n-1} \subset \N^{n-1}$, we may assume $\eta_\varepsilon k \varrho\in \mathbb{N}$)  so that
  \begin{equation*}
  \Gamma_h \cap (F_\m {\times} \R) \subset F_\m{\times} (m_n, m_n +1/2)k^{-1}\,,
  \end{equation*}
  for some $m_n \in \N$ (cf.\ \cite[(4.7), (4.8)]{Cri19}). 
As in \cite[(4.9)]{Cri19}, setting $
Q^-_\m:=Q^- \cap (F_\m{\times}\R)$
we use the Nitsche-type extension \cite[Lemma~2.1]{Cri19}(see also \cite{Nit81}) to extend $\widetilde{u}|_{Q^-_\m}$ 
along the vertical direction, employing the (part of) hyperplans $F_\m{\times}\{m_n\, k^{-1}\}$ 
as the flat interface needed in \cite[Lemma~2.1]{Cri19}: we obtain a function $\widetilde{u}^-_\m$ defined on 
\[
\widetilde{Q}^-_\m:= Q\cap \big(F_\m{\times} (-\infty, (m_n+33)k^{-1})\big)= F_\m{\times} (-\varrho, (m_n+33)k^{-1})\,,
\] 
with $\widetilde{u}^-_\m = \widetilde{u}$ on $F_\m{\times} (-\varrho, (m_n-33)k^{-1})\big)$.
\begin{figure}[h]\label{fig}
\hspace{-2em}
\begin{minipage}[c]{0.49\linewidth}
\includegraphics[width=\linewidth]{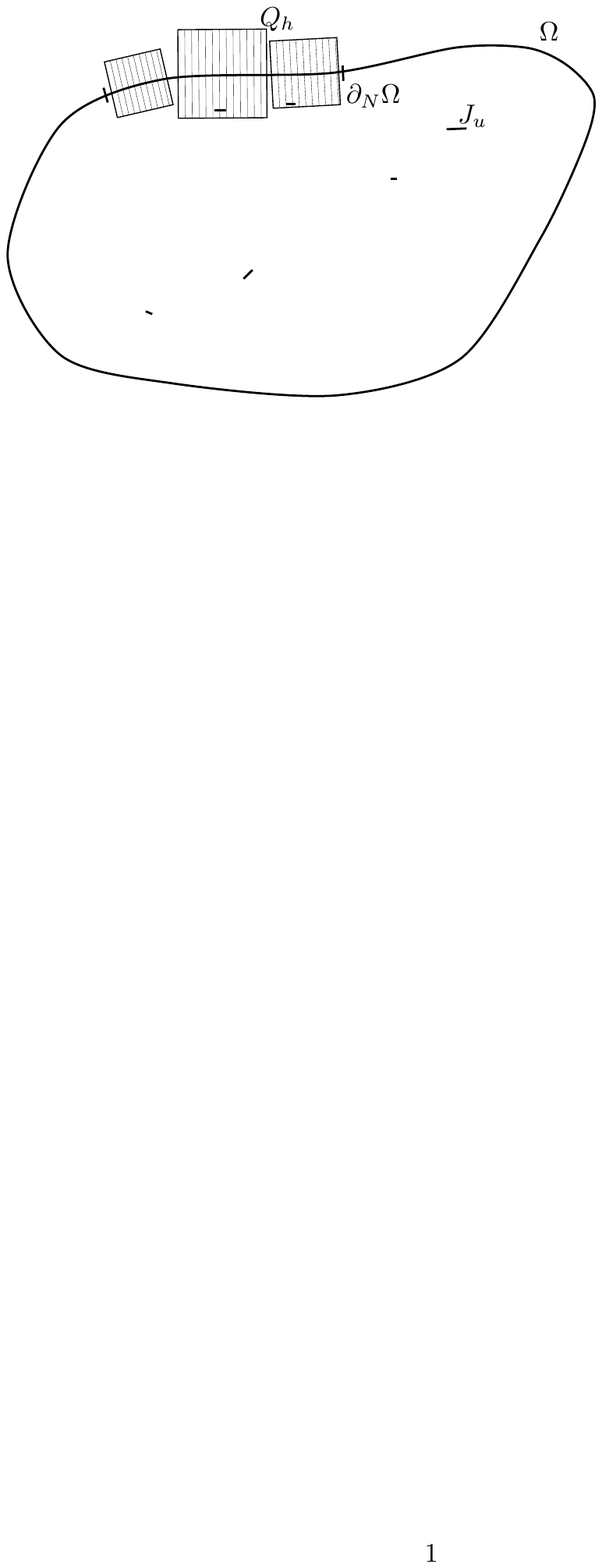}
\end{minipage}
\hfill
\begin{minipage}[c]{0.53\linewidth}
\includegraphics[width=\linewidth]{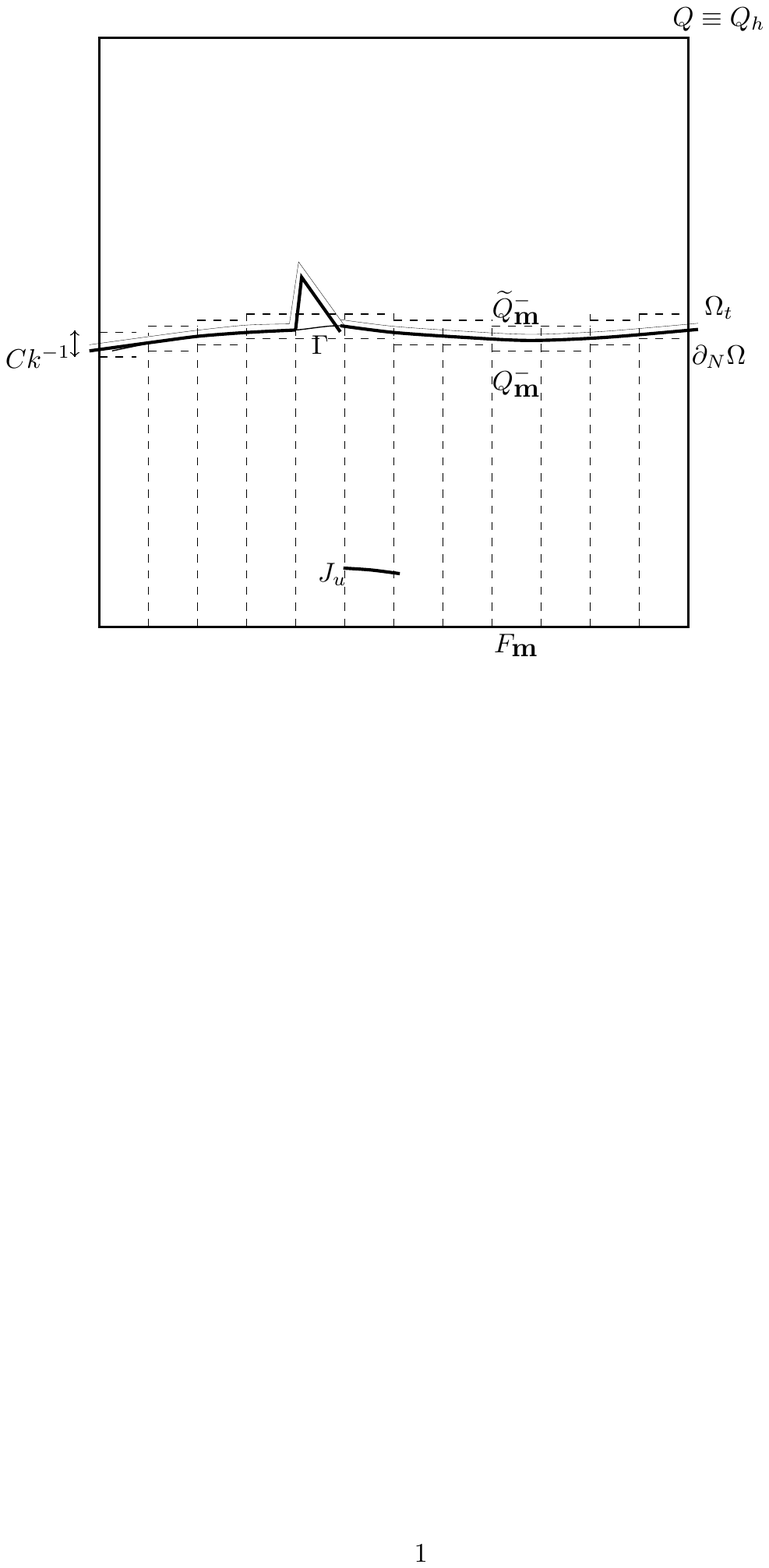}
\end{minipage}
\caption{In the first figure, the cubes $Q_h$ covering almost all $\don$. In the second one, a single cube $Q\equiv Q_h$ with the relative (almost) parallelepipeds $Q^-_\m$, their bottom faces $F_\m$, and their extensions $\widetilde{Q}^-_\m$.
We see the enlarged domain $\Omega_t$, the $C^1$ almost diameter $\Gamma$, and the pieces of hyperplanes $F_\m{\times}\{m_n k^{-1}\}$, below $\Gamma$, along which the original function is extended into $\widetilde{u}^-_\m$. The zones in which we extend \emph{à la} Nitsche have height of order $k^{-1}$.} 
\end{figure}
We do not create a jump on the common boundary between adjacent $Q^-_\m$ except for a region with height of order $k^{-1}$, and this is true also for the jump created with respect to the original $\widetilde{u}$ on the ``boundary parallelepipeds'', namely the $Q^-_\m$ with $\partial Q^-_\m \cap \partial Q \neq \emptyset$.
With the same arguments of \cite[Section~4]{Cri19}, one can control both the measure of the union of these small interfaces (by $C \eta_\varepsilon \varrho^{n-1}$, see \cite[(4.27)]{Cri19}), and the integral of the jump amplitude over this set (cf.\ \cite[(4.32)]{Cri19}).

%
We obtain that, for a universal $c>1$, (neglect the boundary contribution in $D$)
\begin{equation}\label{2212181447}
D^{\widetilde{Q}_\m^-}(\widetilde{u}^-_\m,1) < c D^{Q_\m^-}(\widetilde{u},1)\,,\quad  \quad D^{\widetilde{Q}_\m^- \cup \widetilde{Q}_{\m'}^-}(\widetilde{u}^-_\m \chi_{\widetilde{Q}_\m^-}+ \widetilde{u}^-_{\m'}\chi_{\widetilde{Q}_{\m'}^-},1) < c D^{Q_\m^-\cup Q_{\m'}^-}(\widetilde{u},1)\,,
\end{equation}
 for adjacent $\m$, $\m'$.
 Notice that, since the extension is done with respect to the vertical direction, for the ``boundary parallelepipeds'' $Q^-_\m$ we have that
 \begin{equation}\label{1701191839}
\|\tr \widetilde{u}^-_\m\|_{L^1(\{\dd(\cdot, \dom) < t\} \cap  \partial Q)} \leq c \|u\|_{L^1(\Omega \cap \{\dd(\cdot, \dom) < 2t\} \cap  \partial Q)}\,,
\end{equation}
which vanishes as $k\to \infty$, for $\varepsilon>0$, by \eqref{1701191821} (this is true also taking the union of $\partial Q_h$, since $Q_h$ are in finite number, independent of $k$). 
Eventually, since $u\in BD$, we are able to estimate the trace of $\widetilde{u}^-_\m$ on $F_\m{\times} \{ (m_n+33)k^{-1})\}$, 
in terms of the trace of $u$ on $\Gamma$ (cf.\ e.g.\ \cite[(4.35)]{Cri19}); then we can say that, if $\ol \Omega_t$ intersects  $F_\m{\times} \{ (m_n+33)k^{-1})\}$ (as in the corner for $\don$ in the Figure~1), then
\begin{equation}\label{3001191219}
\int\limits_{\ol\Omega^t \sm \big(F_\m{\times} \{ (m_n+33)k^{-1})\}\big)} \hspace{-4em}|\tr \widetilde{u}^-_\m - u_0| \dh < \int\limits_{\don \sm \Gamma_h} \hspace{-1em} |[\widetilde{u}]| \dh + o_{k\to \infty}(1)\,.
\end{equation}
By \eqref{2012182236}--\eqref{3001191219}
we can see (cf.\ again \cite[(4.16)--(4.34)]{Cri19}) that the extension
\begin{equation*}
\widehat{u}_k:=\begin{dcases}
u &\quad\text{in }\Omega \sm \widehat{Q}\,,\\
 \widetilde{u}^-_\m  &  \quad\text{in }\widetilde{Q}_\m^-\,,\text{ for any }\m \text{ and any } h\,,\\ 
u_0 &\quad \text{elsewhere in }\Omega_t\,.
\end{dcases}
\end{equation*} 
satisfies (in the following neglect the boundary contribution in $D$ when this is evaluated on $\Omega_t$, $\Omega_{t/2}$, in other words we treat the boundary outside $\dod$ as a Neumann part)
\begin{equation*}
D^{\Omega_t}(\widehat{u}_k,1) < D^{\Omega}(u,1)+ c\, D^{\Omega\cap \widehat{Q}}(u,1) +\int\limits_{\don \sm (\cup_h\Gamma_h)} \hspace{-1em}|[\widetilde{u}]| \dh + o_{k\to \infty}(1)\,. 
\end{equation*}
 The vanishing term $o_{k\to \infty}(1)$ accounts also for the jump created on $\{\dd(\cdot, \dom) < 2t\}  \cap \bigcup_h \partial Q_h$, that is controlled by \eqref{1701191821} and \eqref{1701191839}, due to the choice of the cubes $Q_h$. 
At this stage, we can follow the strategy of \cite[Theorem~5.5]{CC17}. For $\delta=t/2=16 k^{-1}$, the function
\begin{equation*}
\widehat{u}_k':=\widehat{u}_k \circ (O_{\delta,x_0})^{-1} + u_0 - u_0 \circ  (O_{\delta,x_0})^{-1}
\end{equation*}
is equal to $u_0$ in a neighbourhood of $\dod$, by \eqref{1806181920}, and satisfies
\begin{equation*}
D^{\Omega_{t/2}}(\widehat{u}_k',1)< D^{\Omega_t}(\widehat{u}_k,1) + o_{k\to \infty}(1)\,.
\end{equation*}
Then we apply the construction of \cite[Theorem~1.1]{Cri19} to $\widehat{u}_k'$, to get $\widetilde{u}_k$ with
\begin{equation*}
D^{\Omega}(\widetilde{u}_k, 1) < D^{\Omega_{t/2}}(\widehat{u}_k',1) + o_{k\to \infty}(1)\,,
\end{equation*}
and $\widetilde{u}_k=u_0\ast \varrho_k$ in a neighbourhood of $\dod$, for $\varrho_k$ a convolution kernel at scale $k^{-1}$. Eventually, we obtain the approximating function $u_k$, satisfying \eqref{2212181528} and close in energy to $u$, by
\begin{equation*}
u_k:= \widetilde{u}_k + u_0 - u_0 \ast \varrho_k\,.
\end{equation*}

%

We are therefore allowed to start (employing Theorem~\ref{teo:CorToa}, that preserves the boundary condition, in a neighbourhood of $\dod$) from a function $u \in \mathcal{W}(\Omega; \Rn)$ with $u=u_0$ in a neighbourhood of $\dod$, $J_u \Subset \Omega$, and it is not restrictive to consider the case $J_u \subset \Pi$ for a suitable hyperplane $\Pi$, say $\Pi=\{x_n=0\}$ to fix a simple notation.
(From now on we regard $x\in \Rn$ as $(x', x_n)$ for $x'\in \R^{n-1}$.)

\begin{remark}\label{rem:2212181552}
To get \eqref{2212181528} it is enough to assume  \eqref{1806181920} separately for $\dod \cap \Omega_j$ and suitable $x_0^j \in \Rn$, $\ol \delta^j>0$, for each connected component $\Omega_j$ of $\Omega$. Moreover, if $p\leq 1^*=\frac{n}{n{-}1}$, the hypothesis \eqref{1806181920} may be dropped by using partitions of the unity 
to guarantee the condition \eqref{2212181528}. This is possible since $u \in L^{1^*}(\Omega;\Rn)$, and so $e(\varphi u)=\varphi \, e(u) + u \odot \nabla \varphi$ is well controlled in $L^p$ for any smooth $\varphi$. We also refer  to \cite{CarVG18} for a corresponding treatment of smooth domains.
\end{remark}   

\begin{remark}\label{rem:2507182324}
A variant of Theorem~\ref{teo:density} with a strong approximation of $\A u$ in $L^p$ and $e(u)-\A u$ in $L^t$ for functions in $SBD^{p\wedge t}$ would allow us to prove the result for functionals $D_k$ depending on $e(u)-\A u$ through a function $g_t$ with $t$-growth, $t\neq p$. Unfortunately, following the proof of Theorem~\ref{teo:density}, this would follow from a refined version of \cite[Proposition~3]{CCF16} controlling two different powers of $\A u$ and $e(u)- \A u$: this seems out of reach with the strategy in \cite{CCF16} that relies on slicing properties, useless for $\A u$. 
For this reason we take $p$ growth both in $\A u$ and in $e(u)-\A u$ (we could consider two different functions $f_p$ and $g_p$, but it is almost the same, 
 taking $f_p$ that acts very differently in the two cases).
\end{remark}

Let us now construct 
 a recovery sequence corresponding to a regular $u$, in the sense described above, by adapting the argument in \cite[Theorem~3.4]{FocIur14}. 
We set
\begin{equation}\label{2207181254}
 \sigma_k(x):=\frac{\varepsilon_k}{2\,p' \psi^{\frac{1}{p'}}(0)} p^{\frac{1}{p}} (p')^{\frac{1}{p'}} (\tilde{f}_p)^{\frac{1}{p}}(|[u](x', 0) \oa e_n|) \quad \text{for }x\in J_u= J_u \cap \Pi\,.
 \end{equation}
 Since $u$ is Lipschitz up to $J_u$, then also $\sigma_k$ is Lipschitz with 
 \begin{equation}\label{2606180829}
 |\nabla \sigma_k| \leq C \varepsilon_k\,,
 \end{equation}
where $C>0$ depends on the Lipschitz constant of $u$, $f_p$, $\A$. As in \cite{FocIur14}, let for any $\varrho<1$
\begin{equation*}
h_1(\varrho):= \psi(1-\varrho),\quad h_2(\varrho):= \Big( \int_0^{1-\varrho} \psi^{-\frac{1}{q}}(s) \dd s \Big)^{-1}, \quad h(\varrho):= h_1\, h_2 (\varrho)\,.
\end{equation*}
Since $\psi$ is positive and vanishing in 1, we have that $h$ is increasing and vanishing in 0 
and $\frac{h_1}{h_2}$ also vanishes in 0, so that $\varrho_k:= h^{-1}(\varepsilon_k)$ is vanishing and
\begin{equation}\label{2606181051}
\lim_{k\to \infty} \frac{h_1(\varrho_k)}{\varepsilon_k} = \lim_{k \to \infty} \frac{\varepsilon_k}{h_2(\varrho_k)}=0\,.
\end{equation}
Let $w_k$ be the unique solution to the Cauchy problem
\begin{equation*}
\begin{dcases}
w'_k= \Big( \frac{q'}{\gamma q} \Big)^{\frac{1}{q}} \varepsilon_k^{-1} \psi^{\frac{1}{q}}(w_k)\,,\\
w_k(0)=0\,,
\end{dcases}
\end{equation*}
in $[0, T_k)$, where $T_k:= \big(\frac{\gamma q}{q'}\big)^{\frac{1}{q}} \varepsilon_k \int_{0}^1 \psi^{-1/q}(s) \dd s  \in (0, \infty]$.
We have that $w_k$ is the inverse of the function 
\[
z\in (\varepsilon_k, 1]\mapsto \Big(\frac{\gamma q}{q'}\Big)^{\frac{1}{q}} \varepsilon_k \int_{0}^{z} \psi^{-1/q}(s) \dd s\,,
\] 
in $[0, T_k)$.
Let $\tau_k:=w_k^{-1}(1-\varrho_k)$, namely 
\begin{equation*}
\tau_k =  \Big(\frac{\gamma q}{q'}\Big)^{\frac{1}{q}} \varepsilon_k \int_{0}^{1-\varrho_k} \psi^{-1/q}(s) \dd s \in (0, T_k)\,,
\end{equation*}
which is infinitesimal in view of \eqref{2606181051}, and define the sets 
\begin{equation*}
\begin{split}
A_k &:= \{ x \in \Rn \colon (x',0) \in J_u, \, |x_n|< \sigma_k(x')\}\,,\\
B_k &:= \{ x \in \Rn \colon (x',0) \in J_u, \, 0 \leq |x_n| - \sigma_k(x') \leq \tau_k\}\,,\\
C_k &:= \{ x \in \Rn \colon (x',0) \notin J_u, \, \mathrm{d}(x, J_u) \leq \tau_k\}\,.
\end{split}
\end{equation*}
The candidate recovery sequence $(u_k, v_k)$ is then
\begin{equation*}
u_k(x):=
\begin{dcases}
\frac{x_n + \sigma_k(x')}{2 \sigma_k(x')} \big( u(x', \sigma_k(x')) - u(x', -\sigma_k(x')) \big) + u(x', -\sigma_k(x'))\,,&\quad\text{if }x\in A_k\,,\\
u(x)&\quad\text{if }x\notin A_k
\end{dcases}
\end{equation*}
and (recall that in the functional there are $v+ \varepsilon_k$ and $v+ \eta_{\varepsilon_k}$)
\begin{equation*}
v_k(x):= 
\begin{dcases}
0 &\quad \text{if }x\in A_k\,,\\
w_k(|x_n|-\sigma_k(x'))  &\quad \text{if }x\in B_k\,,\\
w_k(\mathrm{d}(x, J_u))  &\quad \text{if }x\in C_k\,,\\
1-\varrho_k  &\quad \text{otherwise.}\\
\end{dcases}
\end{equation*}
It is immediate that the sequences $(u_k)_k$ and $(v_k)_k$ converge pointwise to $u$ and $1$.
Moreover, for the components $u_k^i$ of $u_k$,
\begin{equation}\label{2207181238}
\partial_n u_k^i(x)= \frac{u^i(x', \sigma_k(x')) - u^i(x', -\sigma_k(x'))}{2 \sigma_k(x')}\,,\qquad i=1,\dots,n\,,
\end{equation}
and, by straightforward calculations 
(see also \cite{FocIur14}) 
\begin{equation}\label{2207181212}
\begin{split}
|\partial_j u_k^i(x)|\leq |\partial_j \sigma_k(x')|\Big( \frac{|[u^i](x',0)|}{2 \sigma_k(x')} + 4 L \Big) + 3L &\leq C \quad\text{for }j=1,\dots,n{-}1\,,\\
|\partial_n u_k^i(x)| \leq L + \frac{|[u^i](x',0)|}{2 \sigma_k(x')} &\leq \frac{C}{\varepsilon_k}\,,
\end{split}
\end{equation}
in $A_k$, where $L$ is the Lipschitz constant of $u$ in $\Omega\sm J_u$ and $C$ depends on $L$ (recall also \eqref{2606180829}). By the way, $u_k$ is a Lipschitz function. Notice also that
\begin{equation}\label{2207181213}
\lim_{k\to \infty} |\partial_n u_k^i(x)|=\infty\qquad\text{for }x\in J_u\,.
\end{equation}

Let us estimate the energy $D_k(u_k, v_k)$.
We have that
\begin{equation*}
\limsup_{k\to \infty} \int \limits_{\Omega\sm A_k} (v_k+\varepsilon_k^{p-1}) f_p(\A u_k) \dx = \limsup_{k\to \infty} \int \limits_{\Omega\sm A_k} (v_k+\varepsilon_k^{p-1}) f_p(\A u) \dx \leq \int \limits_{\Omega} f_p(\A u) \dx\,,
\end{equation*} 
and
\begin{equation*}
\limsup_{k\to \infty} \int \limits_{\Omega\sm A_k} (v_k+\eta_{\varepsilon_k}) f_p\big(e(u_k) -\A u_k\big) \dx \leq \int \limits_\Omega  f_p\big(e(u)- \A u\big) \dx\,.
\end{equation*} 
Now, recalling \eqref{2207181212}, we get
\begin{equation*}
\int \limits_{A_k} (v_k+\eta_{\varepsilon_k}) f_p\big(e(u_k) -\A u_k\big) \dx =  \int \limits_{A_k} \eta_{\varepsilon_k} f_p\big(e(u_k) -\A u_k\big) \dx \leq \mathcal{L}^n(A_k) \eta_{\varepsilon_k} \frac{C^p}{(\varepsilon_k)^p} \leq C \frac{\eta_{\varepsilon_k}}{(\varepsilon_k)^{p-1}}\,,
\end{equation*}
and this tends to 0 since $\lim_{\varepsilon\to 0} \frac{\eta_\varepsilon}{\varepsilon^{p-1}}=0$.

In view of the fact that $\lim_{k\to \infty}\mathcal{L}^n(A_k)=0$ and of the estimates for the tangential derivatives \eqref{2207181212}, we do not see the contribution of the tangential derivatives in the limit. Moreover, \eqref{2207181213} and assumption \eqref{1806181046} allows us to replace $f_p$ with $\tilde{f}_p$ for the normal derivatives, in the limit. Then (see \eqref{1906181746})
\begin{equation}\label{0308182238}
\limsup_{k\to \infty} \int \limits_{A_k} (v_k+\varepsilon_k^{p-1}) f_p(\A u_k) \dx = \limsup_{k\to \infty} \int \limits_{A_k} \varepsilon_k^{p-1} \tilde{f}_p(\partial_n u_k \oa e_n) \dx\,,
\end{equation}
for $\partial_n u_k$ the vector of the normal derivatives in \eqref{2207181238}
and $e_n=(0,\dots,0,1)\in \Rn$.
By \eqref{2207181238} and the fact that $\tilde{f}_p$ is positively $p$-homogeneous we get 
\begin{equation*}\label{2207181233}
\begin{split}
 \limsup_{k\to \infty} \int \limits_{A_k}&  (v_k+\varepsilon_k^{p-1}) f_p(\A u_k) \dx \\& = \limsup_{k\to \infty} \int \limits_{A_k} \frac{\varepsilon_k^{p-1}}{(2\, \sigma_k(x'))^{p}} \tilde{f}_p\big((u(x', \sigma_k(x')) - u(x', -\sigma_k(x')))\oa e_n\big) \dh(x') \,.
\end{split}
\end{equation*}
Recalling the definitions 
of $\sigma_k$ and $A_k$, the pointwise convergence of $u(x', \sigma_k(x')) - u(x', -\sigma_k(x'))$ to $[u](x)\equiv [u](x')$, a change of variables and the Dominated Convergence Theorem give that the terms above are equal to
\begin{equation*}
\begin{split}
\int \limits_{J_u} \lim_{k\to \infty} \frac{\varepsilon_k^{p-1}}  {(2\, \sigma_k(x'))^{p-1}} \tilde{f}_p\big([u](x')\oa e_n\big) \dh(x') 
\end{split}
\end{equation*}
and then to
\begin{equation*}
\frac{p^{1/p} (p')^{1/p'} \psi(0)^{1/p}}{p}\int \limits_{J_u} (\tilde{f}_p)^{\frac{1}{p}}([u]\oa e_n)\dh\,,
\end{equation*}
since 
\[
(p')^{(1-1/p')(p-1)}=(p')^{1/p'}\,,\qquad p^{-\frac{p-1}{p}}=p^{1/p}/p\,.
\]
As for the remaining terms of $D_k$, notice that by the definition of $v_k$ and \eqref{2606181051} we have
\begin{equation*}\label{2307180129}
\begin{split}
\limsup_{k\to \infty}\int \limits_\Omega \Big[\frac{\psi(v_k)}{\varepsilon_k} + \gamma \varepsilon_k^{q-1}|\nabla v_k|^q \Big] \dx \leq &\limsup_{k\to \infty}\int \limits_{B_k \cup \,C_k} \Big[\frac{\psi(v_k)}{\varepsilon_k} + \gamma \varepsilon_k^{q-1}|\nabla v_k|^q \Big] \dx  \\& +\limsup_{k\to \infty}\int \limits_{A_k} \frac{\psi(v_k)}{\varepsilon_k} \dx\,.
\end{split}
\end{equation*}
We deduce now that
\begin{equation*}\label{2307180130}
\begin{split}
\limsup_{k\to \infty}\int \limits_{B_k} & \Big[\frac{\psi(v_k)}{\varepsilon_k} +  \gamma \varepsilon_k^{q-1}|\nabla v_k|^q \Big] \dx  = \limsup_{k\to \infty} \int \limits_{J_u} \Big( \int_0^{\tau_k}  \Big[\frac{\psi(w_k)}{\varepsilon_k} +  \gamma \varepsilon_k^{q-1} (w'_k)^q \Big]  \mathrm{d} x_n \Big) \, \dh(x') \\
 & = 2 (q')^{1/q'} (\gamma q)^{1/q} \limsup_{k\to \infty} \int \limits_{J_u} \Big( \int_0^{\tau_k} \psi^{1/q'}(w_k) \, w'_k \, \mathrm{d}x_n \Big) \dh(x')
\\  & = 2 (q')^{1/q'} (\gamma q)^{1/q} \limsup_{k\to \infty} \Big(\int_{0}^{1-\varrho_k} \psi^{1/q'}(s)\, \mathrm{d}s\Big) \, \hn(J_u)\\& = a \,\hn(J_u)
\,.
\end{split}
\end{equation*}
Indeed in the first equality we have used the estimate \eqref{2606180829} to neglect the contribution of the tangential derivatives of $v_k$ in the limit, and the second one follows from the definition of $w_k$ ($w'_k$ represents the normal derivative of $v_k$) that gives $\alpha^q=\beta^{q'}$,  that is the condition to have the Young equality $\frac{\alpha^q}{q}+\frac{\beta^{q'}}{q'}= \alpha \, \beta$,  for (recall \eqref{2407182322})
\begin{equation*}
\alpha=\big(\gamma\, q \,\varepsilon_k^{q-1} (w_k')^q \big)^{1/q}\,,\qquad \beta= \big( q' \, \psi(w_k) \,\varepsilon_k^{-1} \big)^{1/q'}\,.
\end{equation*}
%
Furthermore, arguing similarly and using the Coarea formula (cf.\ \cite[eq.\ (4.49)]{FocIur14})  we get 
\begin{equation*}
\int \limits_{C_k} \Big[\frac{\psi(v_k)}{\varepsilon_k} + \gamma \varepsilon_k^{q-1}|\nabla v_k|^q \Big] \dx \leq C  \mu_k \int_{0}^{1-\varrho_k} \psi^{1/q'}(s)\, \mathrm{d}s \leq C \mu_k\,,
\end{equation*}
so that
\begin{equation*}\label{2307180131}
\limsup_{k\to \infty} \int \limits_{C_k} \Big[\frac{\psi(v_k)}{\varepsilon_k} + \gamma \varepsilon_k^{q-1}|\nabla v_k|^q \Big] \dx=0\,.
\end{equation*}
Eventually
\begin{equation*}\label{2307180132}
\int \limits_{A_k} \frac{\psi(v_k)}{\varepsilon_k} \dx = \int \limits_{J_u} \frac{2 \sigma_k(x')}{\varepsilon_k} \psi(0)\dh(x')= \frac{p^{1/p} (p')^{1/p'} \psi(0)^{1/p}}{p'} \int\limits_{J_u} (\tilde{f}_p)^{\frac{1}{p}}([u]\oa e_n)\dh\,.
\end{equation*}
Collecting all the estimates below 
 \eqref{2207181213} we then conclude the $\Gamma$-$\limsup$ inequality.
\begin{remark}\label{rem:0308182234}
With the notation of Remark~\ref{rem:0208182013}, we could reproduce also the proof of the $\Gamma$-$\limsup$ inequality for $\mathbb{B}$ in place of $\A$. Indeed, we define $\sigma_k$ and $u_k$ in terms of $\mathbb{B}$,
and notice that in \eqref{0308182238} we see in the limit $(\partial_n u_k \odot e_n)_D$ plus the contribution of $(\partial_n u_k^n)^+$, asymptotically equal to that of $\mathrm{div}^+ u_k$ by \eqref{2207181212}. Now $2\sigma_k (\partial_n u_k \odot e_n)_D$ converge pointwise to $([u]\odot e_n)_D$ and $2\sigma_k (\partial_n u_k^n)^+$ 
to $[u^n]^+=([u]\cdot e_n)^+$, which gives $\mathbb{B}u$ in the limit, according to \eqref{0408180835}.
\end{remark}

%
%
 
%
%


\bigskip
\noindent {\bf Acknowledgements.}
Vito Crismale has been supported by the Marie Sk\l odowska-Curie Standard European Fellowship No.\ 793018, and by a public grant as part of the \emph{Investissement d'avenir} project, reference ANR-11-LABX-0056-LMH, LabEx LMH.


\end{document}